\theoremstyle{plain} 
\newtheorem{thm}{Theorem}[section] 
\newtheorem{lem}[thm]{Lemma} 
\newtheorem{cor}[thm]{Corollary}
\theoremstyle{remark}
\DeclareMathOperator{\sgn}{sgn} 
\newcommand{\fraction}[2]{\textstyle\frac{#1}{#2}} 
\newcommand{\defeq}{\stackrel{\rm def}{=}}
\numberwithin{equation}{section}
\begin{document}
\setstcolor{red}

\title{An Algorithm for Quadratic $\ell_1$-Regularized Optimization with a Flexible Active-Set Strategy}

\author{Richard H. Byrd$\dagger$\thanks{$\dagger$Department of Computer Science, University of Colorado, Boulder, CO, USA. This author was supported by National Science Foundation grant DMS-1216554 and Department of Energy grant DE-SC0001774.} Jorge Nocedal$\ddagger$ \thanks{$\ddagger$Department of Industrial Engineering and Management Sciences, Northwestern University, Evanston, IL, USA. This material is based upon work supported by the U.S. Department of Energy Office of Science, Office of Advanced Scientific Computing Research, Applied Mathematics program under Award Number FG02-87ER25047. These authors were also supported by  National Science Foundation grant DMS-0810213.} Stefan Solntsev$\ddagger$ }

  \providecommand{\keywords}[1]{\textbf{\textit{Index terms---}} #1}

\maketitle 
\begin{abstract}
	We present an active-set method for minimizing an objective that is the sum of a convex quadratic and $\ell_1$ regularization term. Unlike two-phase methods that combine a first-order active set identification step and a subspace phase consisting of a \emph{cycle} of conjugate gradient (CG) iterations, the method presented here has the flexibility of computing  a first-order proximal gradient step or a subspace CG step at each iteration. The decision of which type of step to perform is based on the relative magnitudes of some scaled components of the minimum norm subgradient of the objective function. 
The paper establishes global rates of convergence, as well as work complexity estimates for two variants of our approach, which we call the iiCG method. Numerical results illustrating the behavior of the method on 
a variety of test problems are presented. 
	\keywords{convex optimization, active-set method, nonlinear optimization, lasso}
\end{abstract}

\section{Introduction}

\label{intro} In this paper, we present an active-set method for the solution of the regularized quadratic problem 
\begin{equation}
	\label{prob} 
	\min_{x \in \mathbb{R}^n} F(x) 
	\defeq \fraction{1}{2}x^T A x - b^T x +\tau{ \| x \| }_1, 
\end{equation}
where $A$ is a symmetric positive semi-definite matrix and $\tau \geq 0$ is a regularization parameter. The motivation for this work stems from the numerous applications in signal processing, machine learning, and statistics that require the solution of problem~\eqref{prob}; see e.g.~\cite{4839045,hastie-book,sra2011optimization} and the references therein. 

Although non-differentiable, the quadratic-$\ell_1$ problem~\eqref{prob} has a simple structure that can be exploited effectively in the design of algorithms, and in their analysis. Our focus in this paper is on methods that incorporate second-order information about the objective function $F(x)$ as an integral part of the iteration. A salient feature of our method is the flexibility of switching between two types of steps: a) first-order steps that improve the active-set prediction; b) subspace steps that explore the current active set through an inner conjugate gradient  (CG) iteration.  The choice between these steps is controlled by the \emph{gradient balance condition} that compares the norm of the free (non-zero) components of the minimum norm subgradient of $F$ with the norm of the components corresponding to the zero variables (appropriately scaled). This condition is motivated by the work of Dostal and Schoeberl~\cite{dostal_minimizing_2005} on the solution of bound constrained problems, but in extending the idea to the quadratic-$\ell_1 $ problem~\eqref{prob}, we deviate from their approach in a significant way. 

We present two variants of our approach that differ in the active set identification step. One variant employs the iterative soft-thresholding algorithm, ISTA~\cite{Daubechies:04,ista,sparsa}, while the other computes an ISTA step on the subspace of zero variables. Our numerical tests show that the two algorithms perform efficiently compared to state-of-the-art codes. We provide global rates of convergence as well as work-complexity estimates that bound the total amount of computation needed to achieve an $\epsilon$-accurate solution. We refer to our approach as the ``interleaved ISTA-CG method'', or iiCG.

The quadratic-$\ell_1$ problem~\eqref{prob} has received considerable attention in the literature, and a variety of first and second order methods have been proposed for solving it. Most prominent are variants of the ISTA algorithm and its accelerated versions~\cite{nesterov83,fista,becker2011templates}, which have extensive theory and are popular in practice. The TFOCS package~\cite{becker2011templates} provides five first-order methods based on proximal gradient iterations that enjoy optimal complexity bounds; i.e., they achieve $\epsilon$ accuracy in at most $O(1/\sqrt{\epsilon})$ iterations. One of these methods, N83, is tested in our numerical experiments. Other first-order methods for problem~\eqref{prob} include LARS~\cite{efron_least_2004}, coordinate descent~\cite{friedman_regularization_2010}, a fixed point continuation method~\cite{hale_fixed-point}, and a gradient projection method~\cite{FigueiredoNowakWright:07}. 

Schmidt~\cite{schmidt_thesis} proposed several scaled sub-gradient methods, which can be viewed as extensions, to quadratic-$\ell_1$ problem~\eqref{prob}, of a projected quasi-Newton method~\cite{andrew2007scalable}, an active-set method~\cite{perkins2003}, and a two-metric projection method~\cite{gafni1984} for bound constrained optimization. He compares these methods with some first-order methods such as GSPR~\cite{FigueiredoNowakWright:07} and SPARSA~\cite{sparsa}. We include the best-performing method (PSSgb) in our numerical tests.

Other second order methods have been proposed as well; they compute a step by minimizing a local quadratic model of $F$. Some of these algorithms transform problem~\eqref{prob} into a smooth bound constrained quadratic programming problem and apply an interior point procedure~\cite{kim_interior-point_2007} or a second order gradient projection algorithm~\cite{schmidt_fast,FigueiredoNowakWright:07}. Methods that are closer in spirit to our approach include FPC\_AS~\cite{wen2010fast}, orthant-based Newton-CG methods~\cite{andrew2007scalable,figi,olsen2012newton}, and the semi-smooth Newton method in~\cite{milzareksemismooth}. Our method differs from all these approaches in the adaptive step-by-step nature of the algorithm, where a different kind of step can be invoked at every iteration. This gives the algorithm the flexibility to adapt itself to the characteristics of the problem to be solved, as we discuss in our numerical tests.

The paper is organized in six sections. In Section~\ref{algo} we motivate our approach and describe the first algorithm. Section~\ref{variants} presents the second algorithm. A convergence analysis and a work complexity estimate of the two variants is given in Section~\ref{global}.  Section~\ref{numer} describes implementation details, such as the use of a line search in the identification phase,  and  numerical results. The contributions of the paper are summarized in Section~\ref{remarks}.

\section{The First Algorithm: iiCG-1}\label{algo} \setcounter{equation}{0}
Let us define
\begin{equation*}
          f(x) \defeq \fraction{1}{2}x^T A x - b^T x, \quad\mbox{and} \quad 
           g(x)  \defeq \nabla f(x) = Ax-b,
 \end{equation*}
so that 
\[
        F(x)= f(x) +\tau{ \| x \|}_1.
\]
Throughout the paper, we assume that $\tau$ is fixed and has been chosen to achieve some desirable properties of the solution of the problem.

The algorithm starts by computing a first-order active-set identification step. Then, a subspace minimization step is computed over the space of free variables (i.e. the non-zero variables given by the first-order step) using  the conjugate gradient (CG) method. After each CG iteration, the algorithm determines whether to continue the subspace minimization or perform a first-order step. This decision is based on the so-called gradient balance condition that we now describe. 

The iterative soft-thresholding (ISTA)~\cite{ista,Daubechies:04} algorithm generates iterates as follows: 
\begin{equation}  \label{ista}
	x^{k+1}  = \arg \min_y \,m^k(y) \defeq \arg \min_{y}  f(x^k) + {(y-x^k)}^T g(x^k) +\frac{1}{2 \alpha }{\| y-x^k\| }^2 + \tau{ \| y \| }_1,
\end{equation}
where $\alpha>0$ a steplength parameter.
Since $m^k$ is a separable function, we can write it as 

\begin{align}
	\label{origista} x^{k+1} &=x^k - \alpha \omega(x^k) - \alpha \psi(x^k ),
\end{align}
where, for $i=1, \ldots, n$,
\begin{align}
	\omega{(x^k)}_i &\defeq \left\lbrace 
	\begin{array}{lll}
		0 & \mbox{ if } & x^k_i \neq 0 \\
		\frac{1}{\alpha} (x^k_i-\arg \min_{y_i} \,m^k(y))  &  \mbox{ if } & x^k_i  = 0  \\ 
	\end{array}
	\right\rbrace \nonumber \\
	&= \left\lbrace
	\begin{array}{lll} 
		0 & \mbox{ if } &x^k_i \neq 0 \\
		0 & \mbox{ if } &x^k_i  = 0   \mbox{ and } | g_i(x^k)| \leq \tau \\
		g_i(x^k) - \tau \sgn ( g_i(x^k)) & \mbox{ if } &x^k_i  = 0   \mbox{ and } | g_i(x^k)| > \tau\\
	\end{array}
	\right\rbrace,\label{minv}\\ \nonumber \\
	\quad \psi{(x^k)}_i &\defeq \left\lbrace  
	\begin{array}{lll}
	    \frac{1}{\alpha} (x^k_i-\arg \min_{y_i} \,m^k(y)  & \mbox{ if } & x^k_i \neq 0 \\
		0 & \mbox{ if } & x^k_i  = 0  \\
	\end{array}
	\right\rbrace \nonumber \\
	& =\left\lbrace  
	\begin{array}{lll}
	    \frac{1}{\alpha} \left( x^k_i-\max \{|x^k_i - \alpha  g_i(x^k) | - \alpha  \tau, 0 \} \sgn (x^k_i - \alpha  g_i(x^k)) \right) & \mbox{ if } & x^k_i \neq 0 \\
		0 & \mbox{ if } & x^k_i  = 0  \\
	\end{array}
	\right\rbrace .
	\label{phitildedef}
\end{align}

Following Dostal and Schoeberl~\cite{dostal_minimizing_2005}, we use the magnitudes of the vectors $\omega$ and $ \psi$ to determine which type of step should be taken. The vector $\omega(x^k)$ consists of the components of the minimum norm subgradient of $F$ corresponding to variables at zero (see~\eqref{vdef}),
and its norm provides a first-order estimate of the expected decrease in the objective resulting from changing those variables. 
Similarly, 
$\|\psi(x^k)\|$ 
provides such an estimate for the free variables, but it is more complex due to the effect of variables changing sign.

Thus, when the magnitude of $\omega(x^k)$ is large, it is an indication that releasing some of the zero variables can produce substantial improvements in the objective. On the other hand, when 
$\|\psi(x^k)\|$
 is larger than $ \| \omega(x^k)\|$, it is an indication that a move in the non-zero variables is more beneficial. 
The algorithm thus monitors the \textit{gradient balance condition} 
\begin{equation}{
	\|\omega(x^k) \|}_2 \leq {\|  \psi(x^k ) \|}_2, 
	\label{gamma} 
\end{equation}
which governs the flow of the iteration and distinguishes it from both two-phase methods~\cite{andrew2007scalable,figi,sparsa} and semi-smooth Newton methods~\cite{milzareksemismooth} for problem~\eqref{prob}.

Let us the describe the algorithm in more detail. The first order step is computed by the ISTA iteration~\eqref{origista}; see e.g.~\cite{figi}. The choice of the parameter $\alpha$ is a is discussed below.

The subspace minimization procedure uses the conjugate gradient method to reduce a model of the objective $F(x)$ on the subspace 
\begin{equation*}
	 H= \{ x | \, x_i= 0 , \quad\mbox{for all $i$ such that } x_i^{\rm cg}=0 \} , 
\end{equation*}
where $x^{\rm cg}$ denotes the point at which the CG procedure was started (this point is provided by the ISTA step). 
The conjugate gradient method is applied to a smooth quadratic function $q$ (as in~\cite{wen2010fast}) that equals the objective $F$ on the current orthant defined by $x^{\rm cg}$, i.e., 
\begin{eqnarray}
	\label{qz} q(x;x^{\rm cg}) 
	\defeq \fraction{1}{2}x^T A x + {(-b + \tau \sgn(x^{\rm cg}))}^T x, 
\end{eqnarray}
where we use the convention $\sgn(0)=0$ and the fact that
\begin{equation}
       \|x\|_1 = \sgn{(x)}^T x. 
	    \label{light}
 \end{equation}
Clearly, $ F(x) = q(x;x^{\rm cg}) $ for all $x$ such that $\sgn(x)= \sgn(x^{\rm cg})$. The algorithm applies the projected CG iteration~\cite[chap 16]{mybook} to the problem 
\begin{align}
	\min_x & \ \ q(x;x^{\rm cg}) \label{cgit} \\
	{\rm s.t.} & \ \ x \in H. \nonumber
\end{align}
The gradient of $q$ at $x^k$ on the subspace $H$, is given by $P(g(x^k) + \tau \sgn(x^k))$, where $P$ is the projection onto $H$. It follows that an iteration of the projected CG method is given by 
\begin{align*}
	x^{k+1} &= x^k + \alpha _{\rm cg}d^k, \quad\mbox{with} \quad \alpha _{\rm cg}= \frac{{(r^k)}^T \rho^k}{{(d^k)}^T A d^k}; \\
	r^{k+1} &= r^k + \alpha _{\rm cg} Ad^k; \nonumber \\
	\rho^{k+1} &= P(r^{k+1}); \nonumber \\
	d^{k+1} &= -\rho^{k+1}+\frac{{(r^{k+1})}^T \rho^{k+1}}{{(r^k)}^T \rho^k} d^k, \nonumber 
\end{align*}
where initially $r^k = g(x^k) + \tau \sgn(x^k)$, $\rho^k=P(r^{k})$, $d^k= -\rho^k$. 

The gradient balance condition~\eqref{gamma} is tested after every CG iteration, and if it is not satisfied, the CG loop is terminated. This is a sign that substantial improvements in the objective value can be achieved by releasing some of the zero variables. This loop is also terminated when the CG iteration has crossed orthants and a sufficient reduction in the objective $F$ was not achieved.

A precise description of the method for solving problem~\eqref{prob} is given in Algorithm iiCG-1. Here and henceforth $\| \cdot \|$ stands for the $\ell_2$ norm, and $v(x)$ denotes the minimum norm subgradient of $F$ at $x$.
\newpage

\bigskip 
\begin{algorithm}
	 \caption{Algorithm iiCG-1} \label{alg1} 
	\begin{algorithmic}
		[1] \REQUIRE $A$, $b$, $\tau$, $x^0$, $c$, and $\alpha $
		 \STATE $k=0$ 
		 \LOOP
		  \STATE $x^{k+1} = x^k - \alpha \omega(x^k) - \alpha \psi(x^k) $ \qquad\qquad\qquad\qquad   \qquad  \qquad  \;  \textit{ISTA step} 
		  \STATE $k=k+1$ 
		  \STATE $r^k = g(x^k) + \tau \sgn(x^k)$, $\rho^k=P(r^{k})$, $d^k= -\rho^k$,  $x^{\rm cg} = x^k$ 
		  \LOOP 
		  \IF {$ \|\omega(x^k) \| > \|  \psi(x^k ) \|$} 
		  \STATE \textbf{break} 
		  \ENDIF \STATE $x^{k+1} = x^k + \frac{(r^k)^T \rho^k}{{(d^k)}^T Ad^k} d^k$ \quad\qquad\quad\qquad\qquad\qquad\quad\quad \; \; \; \; \textit{CG step} 
		  \STATE $r^{k+1} = r^k + \frac{{(r^k)}^T \rho^k}{{(d^k)}^T Ad^k} Ad^k$ 
		  \IF {$\sgn(x^{k+1}) \neq  \sgn (x^{\rm cg})$ and $F(x^{k+1}) > F(x^{k}) - c {\| v(x^{k}) \|}^2$ }
		  \STATE $x^{k+1}=$ \texttt{cutback}$(x_k, x^{\rm cg},d^k)$ 
		  \STATE $k=k+1$
		   \STATE \textbf{break}
		    \ENDIF 
		    \STATE $\rho^{k+1} = P(r^{k+1})$ 
		    \STATE $d^{k+1} = -\rho^{k+1} +\frac{{(r^{k+1})}^T \rho^{k+1}}{{(r^k)}^T \rho^k} d^k$ 
		    \STATE $k=k+1$ 
		    \ENDLOOP 
		    \ENDLOOP 
	\end{algorithmic}
\end{algorithm}

A common choice for the stepsize is 
$\alpha  = 1/L$
 (where $L$ is the largest eigenvalue of $A$) and is motivated by the convergence analysis in Section~\ref{global}. In practice, precise knowledge of $L$ is not needed; in Section~\ref{numer} we discuss a heuristic way of choosing $\alpha$.

Let us  consider Step 13. It can be beneficial to allow the CG iteration to leave the current orthant, as long as the objective $F$ is reduced sufficiently after every CG step. Inspired by the analysis given in Section~\ref{global} (see Lemma~\ref{lem:cgstep}), we require that 
\begin{equation} 
	F(x^{k+1}) \leq F(x^{k}) - c {\| v(x^{k}) \|}^2, 
	\label{sufd}
\end{equation}
for some $c \geq 0$, where $v(x^k)$ is the minimum norm subgradient of $F$ at $x^k$. The CG iteration is thus terminated as follows. If $x^{k+1}$ is the first CG iterate that leaves the orthant, then we either accept it, if it produces the sufficient decrease~\eqref{sufd} in $F$, or we cut it back to the boundary of the current orthant. On the other hand, if both $x^{k+1}$ and $x^k$ lie outside the current orthant and if sufficient decrease is not obtained at $x^{k+1}$, then the algorithm reverts to $x^k$. This procedure is given as follows:

\bigskip 
\textbf{cutback}$(x_k, x^{\rm cg},d^k)$ 
\begin{algorithmic}
 \IF {$\sgn(x^{k}) = \sgn(x^{\rm cg})$ }
  \STATE $\alpha _b = \arg \max_{\alpha _b } \{\alpha _b : \sgn(x^k + \alpha _b d^k ) = \sgn(x^{\rm cg}) \}$
   \STATE $x^{k+1}=x^k + \alpha _b d^k$ 
   \ELSE 
   \STATE $x^{k+1}=x^k$ 
   \ENDIF 
\end{algorithmic}

\medskip 
\noindent
\newpage
One of the main benefits of allowing the CG iteration to move across orthant boundaries is that this strategy prevents the generation of unnecessarily short subspace steps. In addition, if the quadratic model~\eqref{qz} does not change much as orthants change (for example, when $\tau$ is small), CG steps can be beneficial in spite of the fact that they are based on information from another orthant.

Note that in algorithm iiCG-1 the index $k$ may be incremented multiple times during every outer iteration loop. While it is possible to express the same algorithmic logic in a more conventional way, with a single $k$ increment in each outer iteration, our description emphasizes that every inner CG step and ISTA step require approximately the same amount of computational effort, which is dominated by a matrix-vector product.

The algorithm of Dostal and Schoeberl includes a step along the direction $- \omega(x^k)$; its goal is release variables when the gradient balance condition does not hold. We have dispensed with this step, as we have observed that it is more effective to release variables through the ISTA iteration.

\section{The Second Algorithm: iiCG-2} \label{variants} \setcounter{equation}{0}

This method is motivated by the observation that the ISTA step~\eqref{origista} often releases too many zero variables. To prevent this, we replace it (under certain conditions) by a \emph{subspace ISTA step} given by
\begin{align}
	\label{distast} 
	x^{k+1} = x^k - \alpha \psi(x^k ).
\end{align}
 Here, zero variables are kept fixed and the rest of the variables are updated by the ISTA iteration; see definition~\eqref{phitildedef}. Thus,~\eqref{distast} refines the estimate of the active set without releasing any variables.
 
 The subspace ISTA step~\eqref{distast} is performed only when the gradient balance condition~\eqref{gamma} is satisfied. If~\eqref{gamma} is not satisfied, releasing some of the zero variables may be beneficial, and the full-space ISTA step~\eqref{origista} is taken to allow this.
 The freedom to choose among  two types of active-set prediction steps provides the algorithm with a powerful active set identification mechanism; see the results in Section~\ref{numer}. The choice of the steplength $\alpha$ in~\eqref{distast} is discussed in Section~\ref{numer}. A detailed description this method is given in algorithm iiCG-2.

\begin{algorithm}
	 \caption{Algorithm iiCG-2} \label{alg2} 
	\begin{algorithmic}
		[1] \REQUIRE $A$, $b$, $\tau$, $x^0$, $c$, and $\alpha $
		\STATE $k=0$
		\LOOP 
		\IF {$ \|\omega(x^k) \|^2 \leq \|\psi(x^k)\|^2$}
			 \STATE$x^{k+1} = x^k - \alpha \psi(x^k ) $\quad\qquad\qquad\qquad \qquad\qquad\qquad\qquad\;  \;  \textit{Subspace ISTA step} 
		   	\ELSE 
		  \STATE $x^{k+1} = x^k - \alpha \omega(x^k) - \alpha \psi(x^k) $ \qquad\qquad\qquad\qquad   \qquad  \quad \; \textit{ISTA step} 
		     \ENDIF 
		     \STATE $k=k+1$ 
			 \STATE $r^k = g(x^k) + \tau \sgn(x^k)$, $\rho^k=P(r^{k})$, $d^k=-\rho^k$, $x^{\rm cg} = x^k$
		      \LOOP 
		\IF {$ \|\omega(x^k) \|^2 > \|\psi(x^k)\|^2$}
		       \STATE \textbf{break} 
		       \ENDIF 
		       \STATE $x^{k+1} = x^k + \frac{{(r^k)}^T \rho^k}{{(d^k)}^T Ad^k} d^k$ \qquad \qquad \qquad \qquad \qquad \qquad \qquad \; \textit{CG step} 
		       \STATE $r^{k+1} = r^k + \frac{{(r^k)}^T \rho^k}{{(d^k)}^T Ad^k} Ad^k$ 
		       \IF {$\sgn(x^{k+1}) \neq  \sgn (x^{\rm cg})$ and $F(x^{k+1}) > F(x^{k}) - c {\| v(x^{k}) \|}^2$ } 
		       \STATE $x^{k+1}=$ \texttt{cutback}$(x_k, x^{\rm cg},d^k)$ 
		          \STATE $k=k+1$ 
		          \STATE \textbf{break}
		           \ENDIF
		            \STATE $\rho^{k+1} = P(r^{k+1})$
		             \STATE $d^{k+1} = -\rho^{k+1} +\frac{{(r^{k+1})}^T \rho^{k+1}}{{(r^k)}^T \rho^k} d^k$ 
		             \STATE $k=k+1$
		              \ENDLOOP 
		              \ENDLOOP 
	\end{algorithmic}
\end{algorithm}

\section{Convergence Analysis} \label{global} \setcounter{equation}{0}

We establish global convergence for algorithms iiCG-1 and iiCG-2 by showing that their constitutive steps, ISTA, subspace ISTA and CG, provide sufficient decrease in the objective function. We also show a global 2-step Q-linear rate of convergence, and based on the fact that the number of cut CG steps cannot exceed half of the total number of steps, we establish a complexity result. In addition, we establish finite active set identification and termination for the two iiCG methods.

In this section, we assume that $A$ is nonsingular, and denote its smallest and largest eigenvalues by $\lambda$ and $L$, respectively. Thus, for any $x \in \mathbb{R}^n$, 
\begin{equation*}
	\lambda \|x\|^2 \leq x^T A x \leq L \|x\|^2.
\end{equation*} 
We denote the minimizer of $F$ by $x^*$, and in the rest of this section we use the abbreviations
\begin{equation}  
   v=v(x^k), \quad \omega = \omega(x^k), \quad \phi = \phi(x^k), \quad  \psi= \psi(x^k) , \quad g=g(x^k),
    \label{stereo}
 \end{equation}
 when convenient.
We start by demonstrating a Q-linear decrease in the objective $F$ for every ISTA step. 
\begin{lem}
	\label{lem:ista} The ISTA step, 
	\begin{align*}
		x^{k+1} = \arg \min_y m^k(y) = x^k - \alpha \omega(x^k) - \alpha \psi(x^k ), 
	\end{align*}
	with $0< \alpha  \leq 1/L$, satisfies 
	\begin{align*}
		F(x^{k+1}) - F(x^* ) \leq (1-\lambda \alpha  )(F(x^k)-F(x^* ) ). 
	\end{align*}
\end{lem}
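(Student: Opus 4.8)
The plan is to establish the claimed Q-linear decrease by comparing the ISTA step's objective reduction against the reduction achievable along a straight-line path from $x^k$ toward the minimizer $x^*$. The key structural fact I would exploit is that the ISTA iterate $x^{k+1}$ is the exact minimizer of the model $m^k(y)$ defined in~\eqref{ista}, which is a \emph{global upper bound} on $F$ whenever $\alpha \leq 1/L$. Indeed, since $A \preceq L I$, a second-order Taylor expansion of $f$ gives
\begin{equation*}
   f(y) \leq f(x^k) + (y-x^k)^T g(x^k) + \frac{L}{2}\|y-x^k\|^2 \leq f(x^k) + (y-x^k)^T g(x^k) + \frac{1}{2\alpha}\|y-x^k\|^2,
\end{equation*}
so that $F(y) \leq m^k(y)$ for all $y$, with equality at $y=x^k$. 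In particular $F(x^{k+1}) \leq m^k(x^{k+1}) = \min_y m^k(y)$.

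**Next I would** bound this minimum from above by evaluating $m^k$ along the segment $y = x^k + t(x^* - x^k)$ for $t \in [0,1]$, since $x^{k+1}$ is the global minimizer of $m^k$. The linear and $\|x\|_1$ terms in $m^k$ are convex in $t$, while the quadratic penalty $\frac{1}{2\alpha}\|y-x^k\|^2 = \frac{t^2}{2\alpha}\|x^*-x^k\|^2$ contributes the only nonconvexity issue. Using convexity of $f$'s linearization-plus-regularization and the strong-convexity lower bound $\lambda \|x^*-x^k\|^2 \leq (x^*-x^k)^T A (x^*-x^k)$, I expect to derive
\begin{equation*}
   m^k(x^k + t(x^*-x^k)) \leq F(x^k) + t\bigl(F(x^*) - F(x^k)\bigr) + \Bigl(\frac{1}{2\alpha} - \frac{\lambda}{2}\Bigr)t^2 \|x^*-x^k\|^2.
\end{equation*}
Then I would optimize, or simply substitute the distinguished value $t = \lambda\alpha$ (using $\alpha \leq 1/L \leq 1/\lambda$ so that $t \in [0,1]$), to collapse the right-hand side. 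The strong convexity of $F$ also supplies $F(x^k) - F(x^*) \geq \frac{\lambda}{2}\|x^k - x^*\|^2$, which I would use to absorb the quadratic remainder term into the linear term.

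**Combining these**, chaining $F(x^{k+1}) \leq \min_y m^k(y) \leq m^k(x^k + \lambda\alpha(x^*-x^k))$ with the two displays above and the strong-convexity estimate should yield, after subtracting $F(x^*)$ from both sides, exactly
\begin{equation*}
   F(x^{k+1}) - F(x^*) \leq (1 - \lambda\alpha)\bigl(F(x^k) - F(x^*)\bigr).
\end{equation*}

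**The main obstacle** I anticipate is the careful handling of the $\ell_1$ term along the segment: unlike a smooth objective, the nonsmoothness means I cannot simply Taylor-expand, so I must rely on its convexity, $\|x^k + t(x^*-x^k)\|_1 \leq (1-t)\|x^k\|_1 + t\|x^*\|_1$, and fold this into the model evaluation cleanly. A secondary subtlety is ensuring the chosen step parameter $t = \lambda\alpha$ stays in $[0,1]$ and that the coefficient $(\frac{1}{2\alpha} - \frac{\lambda}{2})$ remains nonnegative, both of which follow from $0 < \alpha \leq 1/L$ and $\lambda \leq L$. Once the $\ell_1$ convexity bound is in place, the remainder is routine algebra.
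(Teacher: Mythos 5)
Your skeleton matches the paper's proof: majorize $F$ by $m^k$ (valid since $\alpha \leq 1/L$), use the fact that $x^{k+1}$ minimizes $m^k$ to compare against points on the segment $x^k + t(x^*-x^k)$, and pick $t=\lambda\alpha$. Your displayed intermediate bound is also correct. The gap is in the final combination: handling $F$ along the segment by \emph{plain} convexity and then absorbing the quadratic remainder via $\|x^k-x^*\|^2 \leq \frac{2}{\lambda}\,\Delta$, where $\Delta \defeq F(x^k)-F(x^*)$, provably cannot produce the factor $(1-\lambda\alpha)$. Concretely, at $t=\lambda\alpha$ your remainder is $\left(\frac{1}{2\alpha}-\frac{\lambda}{2}\right)\lambda^2\alpha^2\|x^*-x^k\|^2 = \frac{\lambda^2\alpha(1-\lambda\alpha)}{2}\|x^*-x^k\|^2 \leq \lambda\alpha(1-\lambda\alpha)\Delta$, so the absorption step adds a \emph{positive} multiple of $\Delta$ back in and you end with
\begin{equation*}
F(x^{k+1})-F(x^*) \;\leq\; (1-\lambda\alpha)\Delta + \lambda\alpha(1-\lambda\alpha)\Delta \;=\; (1-\lambda^2\alpha^2)\Delta,
\end{equation*}
which is strictly weaker than $(1-\lambda\alpha)\Delta$ whenever $\lambda\alpha<1$. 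Optimizing over $t$ instead of taking $t=\lambda\alpha$ does not rescue this: minimizing $1-t+\frac{1-\lambda\alpha}{\lambda\alpha}t^2$ over $t\in[0,1]$ gives $1-\frac{\lambda\alpha}{4(1-\lambda\alpha)}$ when $\lambda\alpha\leq 2/3$ (and $\frac{1-\lambda\alpha}{\lambda\alpha}$ otherwise), both strictly larger than $1-\lambda\alpha$ for $\lambda\alpha<1$. So no algebra built from your two stated ingredients reaches the lemma's constant; the claim that the rest is ``routine algebra'' fails.

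The missing idea is to keep the strong-convexity correction at \emph{first} order in $t$ rather than demoting it to a $t^2$ term. The paper does this by applying strong convexity of $F$ along the segment itself, inequality~\eqref{strong1}: $F((1-t)x^k+tx^*) \leq (1-t)F(x^k)+tF(x^*) - \frac{\lambda}{2}t(1-t)\|x^k-x^*\|^2$. Combining this with $m^k(x^k+td) \leq F(x^k+td)+\left(\frac{1}{2\alpha}-\frac{\lambda}{2}\right)t^2\|d\|^2$ (your bound, with $d=x^*-x^k$), the total error term is $\frac{t}{2}\left(\frac{t}{\alpha}-\lambda\right)\|d\|^2$, which vanishes \emph{exactly} at $t=\lambda\alpha$, yielding $F(x^{k+1}) \leq (1-\lambda\alpha)F(x^k)+\lambda\alpha F(x^*)$ with no absorption needed; this is precisely the cancellation in the paper's steps~\eqref{appr} and~\eqref{mage}. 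Alternatively, within your own linearization route: write $f(x^k)+t\,d^Tg = (1-t)f(x^k)+t\left(f(x^k)+d^Tg\right)$ and use $f(x^k)+d^Tg = f(x^*)-\frac{1}{2}d^TAd \leq f(x^*)-\frac{\lambda}{2}\|d\|^2$, so the correction $-\frac{\lambda}{2}\|d\|^2$ enters with coefficient $t$, not $t^2$; together with convexity of the $\ell_1$ term the remainder is again $\frac{t}{2}\left(\frac{t}{\alpha}-\lambda\right)\|d\|^2 = 0$ at $t=\lambda\alpha$. Your display weakened $-\frac{\lambda t}{2}\|d\|^2$ to $-\frac{\lambda t^2}{2}\|d\|^2$, and that single weakening is exactly where the claimed rate is lost.
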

\begin{proof} 
     Since $1/\alpha \geq L$, we have that $m^k(y)$ defined in~\eqref{ista} is a majorizing function for $F$ at $x_k$. Therefore,
	\begin{align*}
		F(x^{k+1}) &\leq m^k(x^{k+1}). 
	\end{align*}
	Since $x^{k+1}$ is the minimizer of $m^k$, for any $d \in \mathbb{R}^n$, we have 
	\begin{align}
		F(x^{k+1}) & \leq m^k(x^{k+1}) \nonumber \\
		 & \leq m^k(x^k+\lambda \alpha d) \nonumber \\
		& = F(x^k+ \lambda \alpha  d ) - \fraction{1}{2} {( \lambda\alpha  d)}^T A ( \lambda \alpha  d) + \frac{1}{2\alpha } \| \lambda \alpha  d\| ^2 \nonumber \\
		& \leq F(x^k+\lambda \alpha  d ) + \fraction{1}{2}\lambda^2\alpha (1 -\lambda\alpha ) \| d\| ^2 . \label{appr} 
	\end{align}
	Since $F$ is a strongly convex function with parameter $\lambda$, it satisfies.
	\begin{equation}
		 F(tx+(1-t)y) \leq tF(x) +(1-t)F(y) - \fraction{1}{2} \lambda t (1-t) \|x-y\|^2, 
		\label{strong1}
	\end{equation}
	for any $x, y \in \mathbb{R}^n$ and $t \in [0,1]$, see~\cite[Pages 63-64]{nesterov2004}. Setting $x \leftarrow x^k$, $y \leftarrow x^*$, and $t \leftarrow (1- \lambda \alpha )$ (which is valid because $\lambda \alpha  \in (0, 1]$), inequality~\eqref{strong1} yields
	\begin{equation} 
	      F(x^k+\lambda \alpha  ( x^* -x^k) ) \leq \lambda\alpha F(x^*) +(1-\lambda\alpha )F(x^k) - \fraction{1}{2}
	       \lambda^2 \alpha  (1-\lambda\alpha ) \|x^*-x^k\|^2 . 
		   \label{mage}
	\end{equation}
	Since~\eqref{appr} holds for any $d$, we can set $d= x^* -x^k$. Substituting~\eqref{mage} in~\eqref{appr}, we conclude that 
	\begin{align*}
		F(x^{k+1}) -F(x^* ) &\leq (1-\lambda\alpha  )(F(x^k)-F(x^* ) ) . 
	\end{align*}
\end{proof}

We now establish a similar result for the subspace ISTA step~\eqref{distast}, under the conditions for which it is invoked, namely when the gradient balance condition is satisfied.

		\begin{lem}
			\label{lem:subista} If 
			$\|\omega(x^k)\| \leq \|\psi(x^k)\|$
			the subspace ISTA step, 
			\begin{eqnarray}
				\label{subistast} x^{k+1} = x^k-\alpha \psi(x^k) , 
			\end{eqnarray}
			with $0< \alpha  \leq 1/L$, satisfies  
			\begin{equation*}
				F(x^{k+1}) - F(x^* ) \leq  (1- \fraction{1}{2} \lambda \alpha  )(F(x^k)-F(x^* ) ). 
			\end{equation*}
		\end{lem}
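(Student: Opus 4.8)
The plan is to exploit the separability of the model $m^k$ together with the properties of the full ISTA step already established in Lemma~\ref{lem:ista}. The key observation is that the subspace iterate $x^{k+1}=x^k-\alpha\psi$ is exactly the minimizer of $m^k$ over the subspace in which the zero components of $x^k$ are held fixed, whereas the full ISTA iterate $\hat x=x^k-\alpha\omega-\alpha\psi$ is the unconstrained minimizer of $m^k$. Since $m^k$ is separable and $m^k(x^k)=F(x^k)$, the reduction in $m^k$ splits coordinate-wise into a contribution $D_\omega$ from the variables at zero (moved by $-\alpha\omega$) and a contribution $D_\psi$ from the free variables (moved by $-\alpha\psi$): the full step attains $D_\omega+D_\psi$, while the subspace step attains exactly $D_\psi$.

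First I would quantify these two contributions. For each free coordinate $i$, the corresponding term of $m^k$ is $\tfrac1\alpha$-strongly convex (its smooth part has curvature $1/\alpha$ and the $\ell_1$ term is convex), so the strong-convexity lower bound at the minimizer gives $D_\psi^{(i)}\ge \tfrac{1}{2\alpha}(x^k_i-x^{k+1}_i)^2=\tfrac{\alpha}{2}\psi_i^2$; summing yields $D_\psi\ge\tfrac{\alpha}{2}\|\psi\|^2$. This argument handles the nondifferentiability cleanly, including coordinates whose soft-threshold update lands at or crosses zero. For each zero coordinate a direct evaluation of the soft-threshold minimizer shows $D_\omega^{(i)}=\tfrac{\alpha}{2}\omega_i^2$, so that $D_\omega=\tfrac{\alpha}{2}\|\omega\|^2$.

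The gradient balance hypothesis $\|\omega\|\le\|\psi\|$ now enters: it gives $D_\omega=\tfrac{\alpha}{2}\|\omega\|^2\le\tfrac{\alpha}{2}\|\psi\|^2\le D_\psi$, and hence $D_\psi\ge\tfrac12(D_\omega+D_\psi)$. In words, when the balance condition holds the free variables account for at least half of the total model decrease, so freezing the zero variables costs at most a factor of two in the rate, which is precisely the source of the $\tfrac12$ in the statement.

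Finally I would chain these facts with Lemma~\ref{lem:ista}. Its proof establishes, en route to the final bound, the intermediate estimate $m^k(\hat x)\le(1-\lambda\alpha)F(x^k)+\lambda\alpha F(x^*)$; combined with $m^k(x^k)=F(x^k)$ this gives $D_\omega+D_\psi=m^k(x^k)-m^k(\hat x)\ge\lambda\alpha(F(x^k)-F(x^*))$. Using the majorization $F(x^{k+1})\le m^k(x^{k+1})=F(x^k)-D_\psi$ together with $D_\psi\ge\tfrac12(D_\omega+D_\psi)$ then yields $F(x^{k+1})\le F(x^k)-\tfrac12\lambda\alpha(F(x^k)-F(x^*))$, which is the claim after subtracting $F(x^*)$. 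I expect the main obstacle to be the second step, namely justifying the per-coordinate decrease relations rigorously in the presence of the nonsmooth $\ell_1$ term; the strong-convexity bound on each separable term, rather than a brute-force case split over sign changes, is the clean way around it.
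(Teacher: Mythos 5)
Your proof is correct, and structurally it follows the same route as the paper: compare the model decrease of the subspace step against that of the full ISTA step, use the gradient balance condition to conclude that the subspace step captures at least half of the full decrease, and then bound the full decrease via the ISTA analysis. Your identities $D_\omega=\tfrac{\alpha}{2}\|\omega\|^2$ and $D_\psi\geq\tfrac{\alpha}{2}\|\psi\|^2$ are precisely the paper's \eqref{comp1} and \eqref{comp2}. The genuine differences are in how these pieces are established. First, the paper proves \eqref{comp2} by writing the coordinatewise optimality conditions and splitting into the cases $x_j^{k+1}\neq 0$ and $x_j^{k+1}=0$ (with multipliers $\sigma_j\in[-1,1]$), followed by sign bookkeeping; your observation that each separable piece of $m^k$ is $\tfrac{1}{\alpha}$-strongly convex, so that its value at $x_i^k$ exceeds its minimum by at least $\tfrac{1}{2\alpha}(\alpha\psi_i)^2$, yields the same bound with no case analysis. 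The nonsmoothness you flag as the main obstacle is in fact harmless: for a $\mu$-strongly convex $h$ with minimizer $y^*$ one has $0\in\partial h(y^*)$, and hence $h(y)\geq h(y^*)+\tfrac{\mu}{2}(y-y^*)^2$, which is all your argument needs. Second, rather than re-deriving the decrease bound with the factor $\tfrac{1}{2}$ carried through a fresh majorization and the strong-convexity inequality \eqref{strong2}, as the paper does, you reuse the estimate $m^k(\hat{x})\leq(1-\lambda\alpha)F(x^k)+\lambda\alpha F(x^*)$ that is implicit in the proof of Lemma~\ref{lem:ista} (it is exactly what \eqref{appr} and \eqref{mage} combine to give). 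This shortens the endgame and is legitimate, at the minor cost of citing an intermediate step of another proof rather than its stated conclusion; if this lemma were to stand alone, you would fold that two-line derivation back in.
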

		
				\begin{proof}

By the definitions~\eqref{minv} and~\eqref{phitildedef}, we have that  $\omega^T \psi=0$, and hence $\omega^T(x^{k+1}-x^{k})=0$ (Recall the abbreviations \eqref{stereo}). Given the pattern of zeros in $x^k$, $\omega$ and $\psi$, we have also that $\| x^{k+1} - \alpha \omega\|_1 =  \| x^{k+1} \|_1+\alpha\|  \omega\|_1$. Using these two observations, we have for the full ISTA step,
		\begin{align}
			\nonumber
		&m^k(x^k-\alpha \omega -\alpha \psi) \\
			\nonumber
		 =& \, f(x^k) + {(-\alpha \omega -\alpha \psi)}^T g +\frac{1}{2 \alpha }\| -\alpha \omega -\alpha \psi\| ^2 + \tau \|x^k-\alpha \omega -\alpha \psi\|_1  \\
			\nonumber
		       =& \, f(x^k) + {( x^{k+1} - \alpha \omega-x^k)}^T g +\frac{1}{2 \alpha }\| x^{k+1} - \alpha \omega-x^k\| ^2 + \tau \| x^{k+1} - \alpha \omega\|_1 \\
			\nonumber
		       =& \, f(x^k) + {( x^{k+1} -x^k)}^T g +\frac{1}{2 \alpha }\| x^{k+1} -x^k\| ^2 + \tau \| x^{k+1} \|_1  - \alpha \omega^T g  +\frac{1}{2 \alpha }\|  \alpha \omega\| ^2+ \tau \| \alpha \omega\|_1 \\
			\nonumber
			   =& \, m^k(x^{k+1})  - \alpha \omega^T g  +\frac{1}{2 \alpha }\|  \alpha \omega\| ^2+ \tau \alpha \omega^T \sgn (\alpha \omega ) \\
			   \label{comp1}
		       =& \, m^k(x^{k+1})  -\frac{\alpha}{2  }\| \omega\| ^2,  
		\end{align}
where the last equality follows from the fact that, by~\eqref{minv}, for all $i$ s.t. $\omega_i \neq 0$, we have $\sgn(g_i)= \sgn(\omega_i)$, and that $w^Tw = w^Tg - \tau \omega^T \sgn(g)$.

				For the subspace ISTA step~\eqref{subistast} we have
				\begin{align}  
					\nonumber
				m^k(x^{k+1}) & = f(x^k) + {(x^{k+1}-x^k)}^T g +\frac{1}{2 \alpha }\| x^{k+1}-x^k\| ^2 + \tau \|x^{k+1}\|_1  \\
					\nonumber
				           & = f(x^k) - \alpha  \psi^T g +\frac{1}{2 \alpha }\|\alpha  \psi\| ^2 + \tau \|x^{k+1}\|_1  \\
				           & = m^k(x^k) - \alpha  \psi^T g +\frac{\alpha}{2  }\|  \psi\| ^2 + \tau \|x^{k+1}\|_1 - \tau \|x^k\|_1.  
						   \label{more}
				\end{align}
We examine the second term on the right hand side. For $j$ such that $x_j^k=0$ we have $\psi_j(x^k)=0$. On the other hand, for $x_j^k \neq 0$ definitions~\eqref{origista} and~\eqref{ista} yield
\begin{equation*}
      x_i^{k+1} = x_i^k + \alpha \psi_i(x^k)= \arg \min_{y_i} f(x^k) + (y_i - x_i^k)g_i(x^k) + \fraction{1}{2\alpha} 
      {(y_i - x_i^k)}^2       + \tau |y_i|.
 \end{equation*}      
				In examining the optimality conditions of this problem there are two cases. If $x_j^{k+1}\neq 0$, we obtain $-g_j=   \psi_j + \tau \sgn(x^{k+1}_j)$. On the other hand, if $x^{k+1}_j =0$ we  have $ -g_j=  \psi_j  +\tau \sigma_j$     for some $\sigma_j \in [-1,1]$.   Substituting for $g_j$ in~\eqref{more}, and using~\eqref{light} gives
		                 \begin{align}
					\nonumber
			    m^k(x^{k+1}) = & \, m^k(x^k) +\sum_{j:x_j^{k+1} \neq 0} \tau \alpha \psi_j \sgn(x^{k+1})_j +\sum_{j:x_j^{k+1} = 0} \tau \alpha \psi_j \sigma_j -\frac{\alpha}{2} \| \psi \|^2  + \tau \|x^{k+1}\|_1 - \tau \|x^k\|_1  \\
					\nonumber
				            = & \, m^k(x^k) +\sum_{j:x_j^{k+1} \neq 0} \tau \alpha \psi_j \sgn(x^{k+1})_j +\sum_{j:x_j^{k+1} = 0} \tau \alpha \psi_j \sigma_j -\frac{\alpha}{2} \| \psi \|^2  \\ \nonumber & + \tau {(x^k - \alpha \psi)}^T\sgn(x^{k+1}) - \tau \|x^k\|_1  \\
					\nonumber
				           = & \, m^k(x^k) +\tau \sum_{j:x_j^{k+1} \neq 0} \left[x^k_j \sgn (x^{k+1})_j -|x^k_j| \right]+\tau \sum_{j:x_j^{k+1} = 0} \left[ x^k_j \sigma_j- |x^k_j| \right]  -\frac{\alpha}{2} \| \psi \|^2  \\
				            \leq & \, m^k(x^k)  -\frac{\alpha}{2} \| \psi \|^2.
					\label{comp2}
				\end{align}

Using in turn~\eqref{comp1}, the gradient balance condition $\|\psi\| \geq \|\omega\|$ and~\eqref{comp2} we have
	\begin{align*}
	m^k(x^k)-m^k(x^k-\alpha \omega -\alpha \psi) &= m^k(x^k)-m^k(x^{k+1}) +m^k(x^{k+1}) -m^k(x^k-\alpha \omega -\alpha \psi)       \\
	              &=m^k(x^k)-m^k(x^{k+1}) + \frac{\alpha}{2} \|\omega\|^2 \\
	&\leq m^k(x^k) -m^k(x^{k+1}) + \frac{\alpha}{2} \| \psi \|^2 \\
	& \leq 2 [m^k(x^k) -m^k(x^{k+1}) ].
	\end{align*}
Since $m^k(x^k)= F(x^k)$, this relation yields 
\[
    m^k(x^{k+1}) - F(x^k) \leq \fraction{1}{2} [m^k(x^k-\alpha \omega -\alpha \psi) - F(x^k) ].
\]
Using this bound, the fact that  $m^k$ is a majorizing function of $F$, and that $(x^k-\alpha \omega -\alpha \psi)$ is a minimizer of $m^k$, we have that for any  $d \in {R}^n$, 
		\begin{align}
		F(x^{k+1}) -F(x^k) &  \leq  \frac{1}{2} [ m^k(x^k + \lambda\alpha  d) -F(x^k) ] \nonumber \\
		& = \frac{1}{2} [ f(x^k) +g^T (\lambda\alpha  d)+ \frac{1}{2 \alpha } \| \lambda \alpha d\| ^2 
		                 +\tau \| x^k+\lambda \alpha d \|_1 -F(x^k) ] \nonumber  \\
		& = \frac{1}{2} [ F(x^k+ \lambda \alpha  d ) -F(x^k) - \fraction{1}{2} {( \lambda\alpha  d)}^T A ( \lambda \alpha  d) + \frac{1}{2\alpha } \| \lambda \alpha  d\| ^2 ] \nonumber \\
		& \leq \frac{1}{2} [F(x^k+\lambda \alpha  d ) - F(x^k) + \fraction{1}{2}\lambda^2\alpha (1 -\lambda\alpha ) \| d\| ^2 ] . \label{grey} 
		\end{align}

		Since  $F$ is a strongly convex function with parameter $\lambda$, it  satisfies 
				\begin{equation}
					F(x+t d) \leq F(x) +t(F(x+d) - F(x)) - \fraction{1}{2} \lambda (1-t) t \|d\|^2, 
				\label{strong2} 
				\end{equation}
				for any $x, d \in {R}^n$ and $t \in [0,1]$. Setting $x \leftarrow x^k$,  and $t \leftarrow  \lambda \alpha $ (which is valid because $\lambda \alpha  \in (0, 1]$), inequality~\eqref{strong2} yields
					\[ F(x^k+\lambda \alpha  d ) \leq \lambda\alpha F(x^k) +(1-\lambda\alpha )F(x^k + d) - \fraction{1}{2} \lambda^2 \alpha  (1-\lambda\alpha ) \|d\|^2 . \]
					 Substituting this inequality in~\eqref{grey}, and setting $d= x^* -x^k$ we conclude that 
					\begin{eqnarray*}
						F(x^{k+1}) -F(x^k ) &\leq \frac{1}{2}  (\lambda\alpha  )(F(x^k +d) -F(x^k ) )  \\ 
						                    &\leq \frac{1}{2}  (\lambda\alpha  )(F(x^* ) -F(x^k ) ) ,
					\end{eqnarray*}
				which implies the result.
				\end{proof}
				
Next, we analyze the conjugate gradient step. To do so, we first  introduce some notation and a technical lemma. The subgradient of $F(x)$ \cite{nesterov2004}, of least norm, is given by
                
				\begin{equation}
					v_i(x) = \left\lbrace
					\begin{array}{lll}
						g_i(x) + \tau \sgn (x_i) & \mbox{ if } & x_i \not = 0 \\
						0 & \mbox{ if } & x_i = 0 \mbox{ and } | g_i(x)| \leq \tau \\
						g_i(x) - \tau \sgn ( g_i(x)) & \mbox{ if } & x_i = 0 \mbox{ and }| g_i(x)| > \tau \\
					\end{array}
					\right\rbrace \mbox {for } \ i=1, \ldots, n.
					\label{vdef} 
				\end{equation}
				Recalling \eqref{minv}, we can write $v(x) = \omega(x) + \phi(x)$, where 

				\begin{equation}\phi_i(x) \defeq \left\lbrace
					\begin{array}{lll}
						g_i(x) + \tau \sgn (x_i) & \mbox{ if } & x_i \not = 0 \\
						0 & \mbox{ if } & x_i = 0 \\
					\end{array} 
					\right\rbrace \mbox {for } \ i=1, \ldots, n.
					\label{phidef} 
				\end{equation}
				
The following result establishes a relationship between $\psi $ and $\phi$. 

				\begin{lem}
					\label{lem:philess} 
				For all $x^k$, we have that $ \| \psi (x^k)  \| \leq \| \phi (x^k)\|$.
				\end{lem}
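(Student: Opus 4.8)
The plan is to reduce the vector inequality to a coordinate-by-coordinate comparison and then sum. First I would observe from the definitions~\eqref{phitildedef} and~\eqref{phidef} that both $\psi(x^k)$ and $\phi(x^k)$ are supported only on the set of \emph{free} indices $\mathcal{F}=\{i: x_i^k\neq 0\}$, vanishing identically on the complementary (active) set. Consequently $\|\psi\|^2=\sum_{i\in\mathcal{F}}\psi_i^2$ and $\|\phi\|^2=\sum_{i\in\mathcal{F}}\phi_i^2$, so it suffices to prove the scalar bound $|\psi_i(x^k)|\leq|\phi_i(x^k)|$ for every $i\in\mathcal{F}$ and then sum the squares.

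Fix such an $i$ and write $g_i=g_i(x^k)$. Since $x_i^k\neq 0$, we have $\phi_i=g_i+\tau\sgn(x_i^k)$. By the symmetry $(x_i^k,g_i)\mapsto(-x_i^k,-g_i)$, which flips the signs of both $\psi_i$ and $\phi_i$ (using that soft-thresholding is an odd function) while leaving their magnitudes unchanged, I may assume $x_i^k>0$, so that $\phi_i=g_i+\tau$. I would then substitute the explicit soft-thresholding formula for $\psi_i$ from~\eqref{phitildedef} and split according to where $z\defeq x_i^k-\alpha g_i$ lies relative to the threshold interval $[-\alpha\tau,\alpha\tau]$.

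The three cases are as follows. If $z>\alpha\tau$ (the iterate stays in the positive orthant), a direct computation gives $\psi_i=g_i+\tau=\phi_i$, so equality holds. If $z<-\alpha\tau$ (the iterate crosses to the negative orthant), then $\psi_i=g_i-\tau$; the defining inequality $x_i^k-\alpha g_i<-\alpha\tau$ together with $x_i^k>0$ and $\alpha,\tau\geq 0$ forces $g_i>0$, and squaring shows $|g_i-\tau|\leq|g_i+\tau|$ precisely when $g_i\geq 0$, which holds. Finally, if $|z|\leq\alpha\tau$ (the iterate is thresholded to $0$), then $\psi_i=x_i^k/\alpha>0$, and the right-hand threshold inequality $x_i^k-\alpha g_i\leq\alpha\tau$ rearranges to $\psi_i=x_i^k/\alpha\leq g_i+\tau=\phi_i$, giving $0<\psi_i\leq\phi_i$. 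In each case $|\psi_i|\leq|\phi_i|$, and summing $\psi_i^2\leq\phi_i^2$ over $i\in\mathcal{F}$ yields $\|\psi\|^2\leq\|\phi\|^2$, hence $\|\psi\|\leq\|\phi\|$.

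I expect the main obstacle to be the two cases where the proximal step drives $x_i^{k+1}$ to zero or across the orthant boundary: there the clean equality $\psi_i=\phi_i$ is lost, and the inequality survives only because the thresholding hypothesis itself pins down the sign of $g_i$ (forcing $g_i>0$ when $x_i^k>0$). Extracting this sign information from each case hypothesis — rather than merely writing $\psi_i=g_i+\tau s_i$ with an unconstrained subgradient $s_i\in[-1,1]$, for which the bound would be false — is the crux of the argument.
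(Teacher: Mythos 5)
Your proof is correct. The reduction to a componentwise bound over the free indices is the same first move the paper makes, but your argument for the scalar inequality $|\psi_i|\leq|\phi_i|$ is genuinely different: you work directly with the closed-form soft-thresholding expression in~\eqref{phitildedef}, normalize to $x_i^k>0$ by oddness, and split on where $z=x_i^k-\alpha g_i$ falls relative to $[-\alpha\tau,\alpha\tau]$, obtaining $\psi_i=\phi_i$ (no crossing), $\psi_i=g_i-\tau$ with $g_i>\tau$ forced (crossing), and $0<\psi_i=x_i^k/\alpha\leq g_i+\tau=\phi_i$ (thresholded to zero). The paper instead avoids the explicit formula: it views $x_j^k-\alpha\psi_j$ and $x_j^k-\alpha\phi_j$ as the minimizers of two strictly convex one-dimensional models (the $\ell_1$ model $m_j^k$ and the orthant model $\bar m_j^k$, which agree near $x_j^k$), subtracts the two strict optimality inequalities to get a sign condition on $x_j^k-\alpha\phi_j$, and then argues geometrically that the $\phi$-displacement must be the longer one. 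Your computation buys transparency and slightly more information --- it exhibits exactly when equality holds and verifies all boundary subcases explicitly --- while the paper's variational argument is formula-free and would survive replacing the quadratic model by any strictly convex one. Two small points to tighten: in the crossing case, the equivalence ``$|g_i-\tau|\leq|g_i+\tau|$ iff $g_i\geq 0$'' should be stated as ``iff $g_i\tau\geq 0$'' (it holds trivially when $\tau=0$); and in the thresholded case your concluding remark attributes the rescue to the forced sign of $g_i$, whereas there it is the threshold inequality $x_i^k-\alpha g_i\leq\alpha\tau$ itself (valid even for $g_i<0$) that gives $\psi_i\leq\phi_i$. Neither affects correctness.
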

				\begin{proof} 

				We  show that $| \psi_j| \leq |\phi_j| $ for each  $j$ such that $x_j \neq 0$ (the other components of the two vectors are zero).  If $\phi_j=\psi_j$ the result holds, so assume  $\phi_j \neq \psi_j$. For such $j$, we have that $x_j-\alpha \psi_j$ minimizes $m_j^k(y)$, which by~\eqref{ista} is given by
				\[
				m_j^k(y) = f(x^k)+g_j(y-x^k_j) + \fraction{1}{2\alpha} {(y-x^k_j)}^2  +|y|, \quad y \in \mathbb{R}.
				\]
On the other hand, $x_j-\alpha \phi_j$ minimizes 

				\begin{equation*}
				\bar{m}_j^k(y) \defeq f(x^k)+g_j(y-x^k_j) +\fraction{1}{2\alpha}{(y-x^k_j)}^2  + \sgn(x^k_j)  y.
				\end{equation*}
				Thus $  m_j^k(x^k_j -\alpha \psi_j) < m_j^k(x^k_j -\alpha \phi_j)$ and $\bar{m}_j^k (x^k_j -\alpha \psi_j) > \bar{m}_j^k(x^k_j -\alpha \phi_j)$, where the inequalities  
				 are strict since both functions are strictly convex. Subtracting these inequalities, we have 
				\begin{equation*}
				m_j^k(x^k_j -\alpha \psi_j) - \bar{m}_j^k(x^k_j -\alpha \psi_j) < m_j^k(x^k_j -\alpha \phi_j) - \bar{m}_j^k(x^k -\alpha \phi).
				\end{equation*}
				Therefore,
				\begin{equation}
				0 \leq |x^k_j -\alpha \psi_j| -\sgn(x^k_j) (x^k_j -\alpha \psi_j)  < |x^k_j -\alpha \phi_j|- \sgn(x^k_j) (x^k_j -\alpha \phi_j)  ,
				\label{phildeproof}
				\end{equation} 
				showing that the right hand side is positive, which implies that
				\begin{equation}
				\sgn(x^k_j)  \neq \sgn(x^k_j-\alpha \phi_j) .
				\label{sgnsnotequal}
				\end{equation} 
                Now note that, since $m^k_j$ and $\bar{m}^k_j$ coincide in a neighborhood of $x^k_j \neq 0$, $\phi_j$ and $\psi_j$ have the same sign.
				We then consider two cases. 
				If $ \sgn(x^k_j)  = \sgn(x^k_j-\alpha \psi_j)$, then by~\eqref{sgnsnotequal}, the displacement $-\alpha \psi_j$ must be shorter than $-\alpha \phi_j$; therefore, $|\psi_j| < |\phi_j|$.
								On the other hand, if $\sgn(x^k_j)  \neq \sgn(x^k_j-\alpha \psi_j)$, the left side of~\eqref{phildeproof} is $ 2 |x^k -\alpha \psi_j|$.	By~\eqref{sgnsnotequal}, the right side of~\eqref{phildeproof} is $2 |x^k_j -\alpha \phi_j|$.
				Thus $2 |x^k_j -\alpha \psi_j| < 2 |x^k_j -\alpha \phi_j|$. Since the displacement $-\alpha \phi_j$ produces a point farther from zero than the displacement $-\alpha \psi_j$, and since both displacements produce points with signs different from $\sgn(x^k_j)$, we have that $x^k_j -\alpha \phi_j$ is farther from $x^k_j$ than $x^k_j -\alpha \psi_j$. Therefore, $|\psi_j| < |\phi_j|$.

				\end{proof}

			We can now show that a conjugate gradient step also guarantees sufficient decrease in the objective function, provided it is not truncated, i.e. that the {\tt cutback} procedure is not invoked.
			\begin{lem}
				\label{lem:cgstep} If Algorithms iiCG-1 or iiCG-2 take a full conjugate gradient step from $x^k$ to $x^{k+1}$, then 
				\begin{equation}
					 \label{decreasev}
				       F(x^{k+1}) \leq F(x^k) - \beta \| v(x^k) \|^2,
				\end{equation}
				where $\beta = \min \{ c, 1/8L \}$, where $c$ is defined in~\eqref{sufd}.
			\end{lem}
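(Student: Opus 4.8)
The plan is to split the analysis by the reason the \texttt{cutback} procedure in Step~13 was not triggered. That test fires only when \emph{both} $\sgn(x^{k+1}) \neq \sgn(x^{\rm cg})$ \emph{and} the sufficient-decrease inequality~\eqref{sufd} fails, so a full CG step falls into one of two cases: (i) the step already satisfies $F(x^{k+1}) \leq F(x^k) - c\|v(x^k)\|^2$, or (ii) $\sgn(x^{k+1}) = \sgn(x^{\rm cg})$, i.e.\ $x^{k+1}$ remains in the reference orthant. Case~(i) is immediate: since $\beta = \min\{c,1/8L\} \leq c$, the hypothesis of~(i) already implies~\eqref{decreasev}. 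The real work is in case~(ii).

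For case~(ii) I would first reduce the claim to a statement about the smooth quadratic $q(\cdot\,;x^{\rm cg})$ of~\eqref{qz}. Using~\eqref{light} together with $\sgn(x^{\rm cg})^T x \leq \|x\|_1$ one gets $F(x) \geq q(x;x^{\rm cg})$ for every $x$, with equality exactly when $\sgn(x)=\sgn(x^{\rm cg})$. Since $x^{k+1}$ lies in the reference orthant in case~(ii), $F(x^{k+1})=q(x^{k+1})$, while $F(x^k)\geq q(x^k)$ in all cases; hence
\[
F(x^k)-F(x^{k+1}) \;\geq\; q(x^k)-q(x^{k+1}),
\]
and it suffices to bound the decrease of the quadratic $q$ along the CG step from below.

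Next I would lower-bound the CG decrease and convert it into $\|v(x^k)\|^2$. Because the projected CG iteration minimizes $q$ over the expanding Krylov subspace inside $H$, the iterate $x^{k+1}$ is no worse than the point produced by an exact steepest-descent step from $x^k$ along $-\rho^k$ (which also lies in that Krylov affine space). Evaluating the steepest-descent decrease and using $(\rho^k)^T A\rho^k \leq L\|\rho^k\|^2$ and $(r^k)^T\rho^k = \|\rho^k\|^2$ (as $P$ is an orthogonal projection) gives $q(x^k)-q(x^{k+1}) \geq \|\rho^k\|^2/(2L)$. When $x^k$ sits in the reference orthant the projected residual satisfies $\rho^k=\phi(x^k)$ from~\eqref{phidef}, so $\|\rho^k\|=\|\phi\|$; the balance condition~\eqref{gamma} tested at the top of the CG loop gives $\|\omega\|\leq\|\psi\|$, Lemma~\ref{lem:philess} gives $\|\psi\|\leq\|\phi\|$, and since $\omega$ and $\phi$ have disjoint supports, $\|v\|^2=\|\omega\|^2+\|\phi\|^2\leq 2\|\phi\|^2=2\|\rho^k\|^2$. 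Combining the pieces yields
\[
F(x^k)-F(x^{k+1}) \;\geq\; \frac{\|\rho^k\|^2}{2L} \;\geq\; \frac{\|v(x^k)\|^2}{4L} \;\geq\; \beta\,\|v(x^k)\|^2 .
\]

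The hard part will be the CG-step lower bound together with the orthant bookkeeping. The steepest-descent comparison leans on Krylov-optimality of CG for the convex quadratic $q$ (a single interior CG step need not beat steepest descent in isolation, but it does relative to the subspace minimizer). More delicately, a full CG step in case~(ii) can start at an $x^k$ that has already crossed out of the reference orthant on a previously accepted step; there $\rho^k\neq\phi(x^k)$ and the clean identity $\|\rho^k\|=\|\phi\|$ breaks down. The inequality $F(x^k)\geq q(x^k)$ still supplies $F(x^k)-F(x^{k+1})\geq q(x^k)-q(x^{k+1})\geq\|\rho^k\|^2/(2L)$, so the remaining obstacle is to control $\|\rho^k\|$ from below by $\|v(x^k)\|$ in that off-orthant configuration; I expect the conservative constant $1/8L$ in the statement (rather than the $1/4L$ that the clean in-orthant estimate above produces) to be exactly the slack absorbing this case.
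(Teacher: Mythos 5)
Your proposal follows the paper's own argument almost step for step: the same case split driven by the \texttt{cutback} test, the same reduction of case (ii) to the smooth quadratic $q(\cdot\,;x^{\rm cg})$ of~\eqref{qz} via $F \geq q$ with equality on the reference orthant, the same use of Krylov optimality of the projected CG iterates to compare $x^{k+1}$ against an exact line-search step along the projected gradient, and the same chain combining the balance condition~\eqref{gamma} with Lemma~\ref{lem:philess} to convert the decrease into $\|v\|^2$. The only cosmetic differences are that the paper takes its comparison step along $-\phi$ with the explicit choice $\zeta = \phi^T\phi/\phi^T A \phi$ rather than along $-\rho^k$ (the same vector when $\sgn(x^k)=\sgn(x^{\rm cg})$), and that it uses the cruder bound $\|v\| \leq \|\omega\| + \|\phi\| \leq 2\|\phi\|$, which is exactly where its constant $1/8L$ comes from; your disjoint-support estimate $\|v\|^2 = \|\omega\|^2 + \|\phi\|^2 \leq 2\|\phi\|^2$ is sharper and yields $1/4L$.

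Regarding the ``hard part'' you flag at the end: your worry is legitimate, but your expectation that the constant $1/8L$ is slack built in to absorb the off-orthant configuration is not borne out --- the paper simply does not treat that case. Its assertions that $P(\nabla q(x^k;x^{\rm cg})) = \phi$ and that $\phi_i = (Ax^k - b + \tau \sgn(x^{\rm cg}))_i$ for $x_i^k \neq 0$ both require $\sgn(x_i^k) = \sgn(x_i^{\rm cg})$ on the relevant components, i.e., they implicitly assume $x^k$ has not itself crossed out of the reference orthant on a previously accepted step. If it has, the two vectors differ by $2\tau\sgn(x_i^{\rm cg})$ on sign-flipped components (and $\rho^k_i$ can be nonzero where $x^k_i = 0$ while $\phi_i=0$), so $\|\rho^k\|$ can indeed be much smaller than $\|\phi\|$: for instance $g_i = -\tau\sgn(x_i^{\rm cg})$ gives $\rho^k_i = 0$ but $\phi_i = -2\tau\sgn(x_i^{\rm cg})$, and no inequality of the form $\|\rho^k\| \gtrsim \|v\|$ is available there. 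So your main-line argument proves exactly what the paper's proof proves (in fact with a better constant), and the residual case you correctly isolated is a genuine, unacknowledged gap in the paper's own proof rather than a defect of your approach relative to it; closing it would require either showing such configurations cannot arise in the algorithm or a separate argument.
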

			\begin{proof}
				If $\sgn(x^{k+1}) \neq  \sgn (x^{\rm cg})$, then CG steps are accepted only if condition~\eqref{sufd} is true, and hence \eqref{decreasev} is satisfied. Otherwise $\sgn(x^{k+1})= \sgn (x^{\rm cg})$, and $F(x^{k+1}) = q(x^{k+1} )$. Since we assume $x^{k+1}$ is given by a full CG step, it follows that $x^{k+1}$ is the result of a sequence of projected CG steps on problem~\eqref{cgit}, starting at $x^{cg}$. It is well known that a CG iterate $x^{k+1}$ is a global minimizer of $q(\cdot \,;x^{\rm cg})$ in the subspace $S= {\rm span }\{d^k, d^{k-1}, \ldots \},$ i.e.,
				\[ q(x^{k+1}) = \min \{ q(x^k+y) : y \in S \}. \]
				It is also known that $P(\nabla q(x^k;x^{\rm cg})) \in S$, see~\cite[Theorem 5.3]{mybook}, and it follows from~\eqref{phidef} that $P(\nabla q(x^k;x^{\rm cg}))= \phi$. Thus,
	
				\[ F(x^{k+1}) = q(x^{k+1} )\leq q(x^k - \zeta  \phi), \]
				for any $\zeta $. Let us choose
				\[ \zeta  = \frac{\phi^T \phi}{\phi^T A \phi}. \]
				Recalling~\eqref{qz}, we have 
				\begin{align*}
					F(x^{k+1}) & \leq \fraction{1}{2} (x^k - \zeta  \phi)^T A (x^k - \zeta  \phi) + (-b +\tau \sgn(x^{\rm cg}))^T(x^k - \zeta  \phi ) \\
					& = F(x^k) + \frac{1}{2}\zeta  \phi^T A \zeta  \phi - \zeta  \phi^T A x^k + (-b +\tau \sgn(x^{\rm cg}))^T(- \zeta  \phi ) \\
					& = F(x^k) + \frac{\zeta }{2} \| \phi \|^2 - \zeta  \phi^T (Ax^k - b + \tau \sgn(x^{\rm cg})). 
				\end{align*}
				By~\eqref{phidef}, for $i$ such that $x_i^k=0$ we have $\phi_i=0$, and for $x_i^k \neq 0$ we have that $\phi_i = (Ax^k - b + \tau \sgn(x^{\rm cg}))_i$. Therefore, 
				\begin{align*}
					F(x^{k+1}) & = F(x^k) - \frac{\zeta }{2} \| \phi \|^2 \\
					& \leq F(x^k) -\frac{1}{2 L} \| \phi \|^2. 
				\end{align*}
	
				Since the step is only taken when condition~\eqref{gamma} is true, we have from Lemma~\ref{lem:philess} that
				\[ \|v\| \leq \|\omega\| + \|\phi\| \leq \|\psi\| + \|\phi\| \leq 2\|\phi\|. \]
				Therefore, we have that the following bound holds after one CG iteration,
				\begin{equation} 
				       F(x^{k+1}) \leq F(x^k) - \frac{1}{8 L} \| v \|^2 .
				\end{equation}
				This implies \eqref{decreasev} by definition of $\beta$.
			\end{proof}

			We can now establish a 2-step $Q$-linear convergence result for both algorithms by combining the properties of their constitutive steps
					\begin{thm}
						\label{thm:qlinear} Suppose that the stepsize $\alpha $ in the ISTA step~\eqref{origista} and the subspace ISTA step~\eqref{subistast} satisfy $\fraction{1}{8L} \leq \alpha  \leq \fraction{1}{L}$, and let  $\beta = \min \{ c, 1/8L \}$. Then, for the entire sequence $\{x^k\}$ generated by Algorithms iiCG-1 and iiCG-2 we have 
						\begin{equation}
							\label{pre-recursion} F(x^{k+2}) - F(x^*) \leq \left(1-\frac{\lambda \beta }{2} \right) (F(x^k) - F(x^*)),
						\end{equation}
						and thus $\{ x^k\} \rightarrow x^* $. 
					\end{thm}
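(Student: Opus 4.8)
The plan is to classify every step of the sequence $\{x^k\}$ into two categories and to track how each affects the optimality gap $F(\cdot)-F(x^*)$. I will call a step \emph{productive} if it is an ISTA step, a subspace ISTA step, or a full (accepted, non-truncated) CG step, and a \emph{cutback} step if it is produced by the \texttt{cutback} procedure. The two claims I would establish are: (i) every productive step contracts the gap by a factor at most $1-\tfrac12\lambda\beta$; and (ii) every cutback step does not increase $F$, i.e. its gap factor is at most $1$. The theorem then reduces to the structural fact that two cutback steps cannot occur consecutively, so that any window $[x^k,x^{k+2}]$ contains at least one productive step.

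For (i) I would invoke the three decrease lemmas and normalize them to the common factor $1-\tfrac12\lambda\beta$. Lemmas~\ref{lem:ista} and~\ref{lem:subista} give factors $1-\lambda\alpha$ and $1-\tfrac12\lambda\alpha$ (the latter under the gradient balance condition, which Algorithm iiCG-2 enforces before taking a subspace ISTA step); since $\alpha\ge 1/(8L)\ge\beta$, both are bounded above by $1-\tfrac12\lambda\beta$. The CG case needs one extra ingredient, since Lemma~\ref{lem:cgstep} bounds the decrease by $\beta\|v(x^k)\|^2$ rather than by a fraction of the gap. I would bridge this with a gradient-domination inequality of Polyak--Lojasiewicz type, $F(x^k)-F(x^*)\le \tfrac{1}{2\lambda}\|v(x^k)\|^2$, which follows from the strong-convexity lower bound $F(x^*)\ge F(x^k)+v(x^k)^T(x^*-x^k)+\tfrac\lambda2\|x^*-x^k\|^2$ by minimizing its right-hand side over the displacement. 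Combining gives the factor $1-2\lambda\beta\le 1-\tfrac12\lambda\beta$, so all three productive steps share the claimed factor.

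For (ii) I would read the non-increase directly off the \texttt{cutback} procedure. If $x^k$ still lies in the orthant of $x^{\rm cg}$, the procedure moves partway along $d^k$ to the orthant boundary at steplength $\alpha_b\le\alpha_{\rm cg}$; since $q(\cdot\,;x^{\rm cg})$ is a convex quadratic whose exact line minimizer along $d^k$ is at $\alpha_{\rm cg}$, it is non-increasing on $[0,\alpha_{\rm cg}]$, and because $q=F$ on the closed orthant we get $F(x^{k+1})\le F(x^k)$. If $x^k$ already lies outside that orthant, the procedure returns $x^k$ unchanged. Either way $F(x^{k+1})\le F(x^k)$.

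Finally I would assemble the two-step recursion. Because a cutback step is immediately followed by a \textbf{break} and then by the ISTA or subspace ISTA step that begins the next outer iteration, no two cutbacks are adjacent; hence every pair of consecutive steps contains at least one productive step. Chaining the productive factor ($\le 1-\tfrac12\lambda\beta$) with the cutback factor ($\le 1$) in either order yields~\eqref{pre-recursion}. Since $0<1-\tfrac12\lambda\beta<1$ (using $\beta\le 1/(8L)$ and $\lambda\le L$), iterating drives $F(x^k)-F(x^*)\to 0$, and strong convexity through $\tfrac\lambda2\|x^k-x^*\|^2\le F(x^k)-F(x^*)$ gives $x^k\to x^*$. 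I expect the main obstacle to be the bookkeeping rather than any single estimate: pinning down from the algorithm's control flow that cutbacks are genuinely isolated, and confirming the non-increase of the truncated CG step, since these are exactly what license the passage from per-step decrease to the two-step contraction.
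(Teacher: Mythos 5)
Your proposal is correct and follows essentially the same route as the paper's proof: per-step contraction for ISTA, subspace ISTA, and full CG steps (the CG case bridged through strong convexity and Lemma~\ref{lem:cgstep}), combined with the structural fact that a cut-back step is always followed by a first-order step, which yields the two-step recursion~\eqref{pre-recursion}. The only local differences are cosmetic: you invoke the Polyak--Lojasiewicz bound $F(x^k)-F(x^*)\le \|v(x^k)\|^2/(2\lambda)$ where the paper instead chains $F(x^k)-F(x^*)\le\|v(x^k)\|\,\|x^*-x^k\|$ with $F(x^k)-F(x^*)\ge \frac{\lambda}{2}\|x^k-x^*\|^2$, and you explicitly verify that a cut-back step cannot increase $F$ (monotonicity of $q(\cdot\,;x^{\rm cg})$ on $[0,\alpha_{\rm cg}]$ together with $F=q$ on the closed orthant), a detail the paper covers only by asserting that the algorithms are descent methods.
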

					\begin{proof}
				By Lemma~\ref{lem:ista} and the lower bound on $\alpha $, we have that the ISTA step satisfies
				\begin{equation}
					\label{decreaseeee}
					 F(x^{k+1}) - F(x^*) \leq (1-\fraction{\lambda}{16 L}) (F(x^k) - F(x^*)) . 
					 \end{equation}
					 By Lemma~\ref{lem:subista}, and the lower bound on $\alpha$, we have that the subspace ISTA step also satisfies~\eqref{decreaseeee}. By definition of $\beta$, relation \eqref{decreaseeee} implies \eqref{pre-recursion}.
		 
					 Now a that full CG steps provide the decrease~\eqref{decreasev}. Convexity of $F$ shows that
					\[ F(x^k)-F(x^*) \leq -v^T(x^*-x^k) \leq \| v \| \|x^*-x^k \| , \]
					which combined with~\eqref{decreasev} gives
					\[ F(x^k) -F(x^{k+1}) \geq \beta \frac{(F(x^k)-F(x^*))^2}{ \|x^*-x^k \|^2}. \]
					Furthermore, since $F$ is strongly convex, it satisfies
					\[ F(x^k) - F(x^*) \geq \fraction{\lambda}{2} \| x^k-x^*\|^2, \]
					see~\cite[pp. 63-64]{nesterov2004}. Using this bound we conclude that full CG steps satisfy~\eqref{pre-recursion}.

				 Let us assume now that all CG steps terminated by the {\tt cutback} procedure; i.e., that the worst case happens. After every such shortened CG step which may not provide sufficient reduction in $F$, the algorithm computes an ISTA step. Therefore, the Q-linear decrease~\eqref{decreaseeee} is guaranteed for every 2 steps, yielding~\eqref{pre-recursion} for all $k$.
	
				Since the algorithms are descent methods, this implies the entire sequence satisfies $F(x^k) \rightarrow F(x^*)$ monotonically. Moreover, since $F$ is strictly convex it follows that $x^k \rightarrow x^*$. 
					\end{proof}  

			The most costly computations in our algorithms are matrix-vector products; the rest of the computations consist of vector operations. Therefore, when establishing bounds on the total amount of computation required to obtain an $\epsilon$-accurate solution, it is appropriate to measure work in terms of matrix-vector products. Since there is a single matrix-vector product in each of the constitutive steps of our methods, a work complexity result can be derived from Theorem~\ref{thm:qlinear}. 
			\begin{cor}
				The number of matrix-vector products required by algorithms iiCG-1 and iiCG-2 to compute an iterate $\hat{x}$ such that 
				\begin{equation}
					\label{withinepsilonobjective} F(\hat{x}) - F(x^*) \leq \epsilon ,
				\end{equation}
				is at most
				\begin{equation}
					\label{kwork} \log \left[ \frac{\epsilon} {F(x^0) - F(x^*) } \right] \Big/ \log \sqrt{1-\fraction{\lambda \beta}{2}} . 
				\end{equation}
				\label{corl}
			\end{cor}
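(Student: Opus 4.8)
The plan is to read off the complexity bound directly from the two-step $Q$-linear rate established in Theorem~\ref{thm:qlinear}, combined with the bookkeeping observation that the total number of matrix-vector products equals the final iteration index $k$. The latter holds because each constitutive step — an ISTA step, a subspace ISTA step, or a CG step — increments $k$ exactly once and performs exactly one matrix-vector product; in particular the \texttt{cutback} operation reuses the already-computed product $Ad^k$ and introduces no new one. Hence counting matrix-vector products is the same as counting increments of $k$, and it suffices to bound the index at which the target accuracy is first reached.

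First I would abbreviate $\sigma \defeq 1 - \lambda\beta/2 \in (0,1)$ and unfold the recursion~\eqref{pre-recursion}. Since Theorem~\ref{thm:qlinear} guarantees $F(x^{k+2}) - F(x^*) \leq \sigma\,(F(x^k)-F(x^*))$ for every $k$, applying it repeatedly along the even subsequence starting from $x^0$ yields
\[
F(x^{2m}) - F(x^*) \leq \sigma^{m}\,\bigl(F(x^0) - F(x^*)\bigr).
\]
Next I would solve for the smallest $m$ that drives the right-hand side to $\epsilon$. Imposing $\sigma^m\bigl(F(x^0)-F(x^*)\bigr)\leq\epsilon$ and taking logarithms — noting that both $\log\sigma$ and $\log[\epsilon/(F(x^0)-F(x^*))]$ are negative, so dividing by $\log\sigma$ reverses the inequality and leaves a positive quantity — gives
\[
m \geq \log\!\left[\frac{\epsilon}{F(x^0)-F(x^*)}\right]\Big/\log\sigma .
\]

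The remaining step is the conversion from the iteration index $2m$ to the advertised count. Because reaching $x^{2m}$ costs exactly $2m$ matrix-vector products, and because $2/\log\sigma = 1/\log\sqrt{\sigma}$ via $\log\sqrt{\sigma}=\tfrac12\log\sigma$, I obtain
\[
2m \geq \log\!\left[\frac{\epsilon}{F(x^0)-F(x^*)}\right]\Big/\log\sqrt{\sigma},
\]
which is precisely~\eqref{kwork} after writing $\sigma = 1-\lambda\beta/2$. Monotonicity of $\{F(x^k)\}$ (the methods are descent methods, so $F$ is non-increasing across ISTA, subspace ISTA, full CG, and \texttt{cutback} steps alike) ensures that once the accuracy~\eqref{withinepsilonobjective} is attained at an even index it persists, so this even-subsequence count is a valid upper bound on the work. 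I expect the only delicate point to be this final factor-of-two absorption into the square root together with the correct handling of the negative logarithms; everything else is a routine unfolding of the geometric recursion. A minor rounding subtlety — the integer ceiling of $m$ contributes an additive $O(1)$ to the count — can be noted as negligible or absorbed into the stated bound.
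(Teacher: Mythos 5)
Your proof is correct and takes essentially the same route as the paper: it unfolds the two-step recursion~\eqref{pre-recursion} from $x^0$ to get $\left(1-\lambda\beta/2\right)^{k/2}(F(x^0)-F(x^*))\leq\epsilon$, solves for the index $k$, and identifies iterations with matrix-vector products. Your additional remarks on the one-product-per-step bookkeeping, the sign of the logarithms, and the $O(1)$ rounding slack merely make explicit what the paper's two-line proof leaves implicit.
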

			\begin{proof}
				By~\eqref{pre-recursion}, the condition~\eqref{withinepsilonobjective}, with $\hat{x}= x^{k+2}$, will be satisfied by an integer $k$ such that
				\[ \left(1-\fraction{\lambda \beta }{2}\right)^{\frac{k}{2}} (F(x^0) - F(x^*)) \leq \epsilon. \]
				We obtain~\eqref{kwork} by solving for $k$. 
			\end{proof}

From the point of view of complexity, choosing $c=1/8L$ is best, but in practice we have found it more effective to use a very small value of $c$ since the strength of the conjugate gradient method  is sometimes observed only after a few iterations are computed outside the orthant. More generally, Corollary~\ref{corl} represents worst-case analysis, and is not indicative of the overall performance of the algorithm in practice. In our analysis we used the fact that a CG step is no worse than a standard gradient step -- a statement that hides the power of the subspace procedure, which is evident in the finite termination result given next.

To prove that algorithms iiCG-1 and iiCG-2 identify the optimal active manifold and the optimal orthant in a finite number of iterations, we assume that strict complementarity holds. Since $v(x^*)=0$, it follows from~\eqref{vdef} that for all $i$ such that $x_i^*=0$ we must have $|g_i (x^*)| \leq \tau$. We say that the solution $x^*$ satisfies strict complementarity if $x_i^*=0$ implies that $|g_i(x^*)| <\tau$. 
\begin{thm}
	\label{thm:termination} If the solution $x^*$ of problem~\eqref{prob} satisfies strict complementarity, then for all sufficiently large $k$, the iterates $x^k$ will lie in the same orthant and active manifold as $x^*$. This implies that iiCG-1 and iiCG-2 identify the optimal solution $x^*$ in a finite number of iterations. 
\end{thm}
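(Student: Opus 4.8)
The plan is to combine the global convergence $x^k \to x^*$ established in Theorem~\ref{thm:qlinear} with strict complementarity, exploiting two structural facts about the steps: the soft-threshold map in the ISTA step~\eqref{phitildedef} can set components to \emph{exactly} zero, and the projection $P$ in the CG step keeps components that are zero at $x^{\rm cg}$ at zero. Write $\mathcal{A}^* = \{i : x_i^*=0\}$ and $\mathcal{I}^* = \{i : x_i^* \neq 0\}$, with $m = |\mathcal{I}^*|$. I would first identify the free variables and the zero variables separately, then show this identification is preserved by the interleaved steps, and finally invoke finite termination of CG on the reduced quadratic.

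First I would choose $K$ large enough that, using $x^k \to x^*$ and continuity of $g$, the following hold for all $k \geq K$. For $j \in \mathcal{I}^*$: $x_j^k \neq 0$, $\sgn(x_j^k) = \sgn(x_j^*)$, and, from the optimality relation $g_j(x^*) = -\tau\,\sgn(x_j^*)$ giving $|x_j^* - \alpha g_j(x^*)| = |x_j^*| + \alpha\tau > \alpha\tau$, also $|x_j^k - \alpha g_j(x^k)| > \alpha\tau$. For $i \in \mathcal{A}^*$: $|g_i(x^k)| < \tau$ (this is where strict complementarity $|g_i(x^*)| < \tau$ enters) and, since $x_i^k \to 0$, $|x_i^k - \alpha g_i(x^k)| < \alpha\tau$. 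The free-variable facts are immediate from convergence; the zero-variable facts are the crux and rely essentially on strict complementarity.

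Next I would show that one ISTA-type step at an index $k \geq K$ attains the correct configuration; recall every outer iteration of both algorithms begins with such a step. For $i \in \mathcal{A}^*$: if $x_i^k = 0$ then $\omega_i = \psi_i = 0$ and the variable stays zero, while if $x_i^k \neq 0$ the strict inequality $|x_i^k - \alpha g_i(x^k)| < \alpha\tau$ forces $x_i^{k+1} = 0$ through~\eqref{phitildedef}; crucially, $|g_i(x^k)| < \tau$ makes $\omega_i = 0$, so even a full ISTA step does not release these variables. For $j \in \mathcal{I}^*$, the inequality $|x_j^k - \alpha g_j(x^k)| > \alpha\tau$ keeps $x_j^{k+1}$ nonzero with the sign of $x_j^*$. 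Hence the active manifold of $x^{k+1}$ is exactly $\mathcal{A}^*$ and its orthant is that of $x^*$. In the ensuing inner CG loop the zero variables stay zero because $P$ annihilates the components with $x_i^{\rm cg}=0$, and since all iterates of index $\geq K$ are close to $x^*$ the free variables retain the sign of $x^*$, so the cutback test $\sgn(x^{k+1}) \neq \sgn(x^{\rm cg})$ never fires. Thus $x^k$ lies in the orthant and active manifold of $x^*$ for all sufficiently large $k$.

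Finally, for finite identification of $x^*$ itself, I would note that once the active manifold is $\mathcal{A}^*$ and $|g_i(x^k)| < \tau$ there, every component of $\omega(x^k)$ vanishes, so the gradient-balance break $\|\omega\| > \|\psi\|$ never triggers; together with the inactive cutback test, the inner CG loop then executes a sequence of \emph{full} projected CG steps on the strictly convex quadratic $q(\cdot\,;x^*)$ restricted to $H^* = \{x : x_i = 0,\ i \in \mathcal{A}^*\}$, whose minimizer is exactly $x^*$. The projected CG method therefore reaches $x^*$ in at most $m$ iterations, and $x^*$ is a fixed point of every step thereafter. The main obstacle is the exact-zero identification in the second paragraph: convergence alone only yields $x_i^k \to 0$, and it is precisely strict complementarity, entering through the strict inequality $|x_i^k - \alpha g_i(x^k)| < \alpha\tau$, that lets the nonsmooth soft-threshold map collapse these components to exactly zero while simultaneously forcing $\omega_i = 0$ so they are never released again.
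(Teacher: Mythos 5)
Your proof is correct and follows essentially the same route as the paper's: convergence from Theorem~\ref{thm:qlinear} plus strict complementarity gives a neighborhood in which the soft-threshold map collapses the components in $\mathcal{A}^*$ (the paper's $Z^*$) to exactly zero while $\omega=0$ prevents their release and makes the two algorithms behave identically, free components keep the sign of $x^*$, and finite termination of projected CG on the reduced strictly convex quadratic then yields $x^*$ exactly. The only point the paper makes explicit that you leave implicit is why an ISTA-type step must actually occur at some index $k \geq K$ --- the paper observes it must happen within $n$ iterations of $k_0$ because the inner CG loop terminates finitely --- but this is the same finite-termination reasoning you already invoke at the end, so the omission is cosmetic rather than substantive.
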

\begin{proof}
	We start by defining the sets 
	\begin{align*}
		Z^* = \{i: x^*_i=0\}, \quad N^* = \{i: x^*_i < 0\}, \quad P^* = \{i: x^*_i > 0\}, 
	\end{align*}
	and the constants
	\[ \delta_1 = \min_{i \in N^* \cup P^* }{\frac{|x_i^*|}{2}}, \quad \delta_2 = \min_{i \in Z^*}\left[ \frac{\tau - |g_i(x^*)|}{2}\right]. \]
	Clearly $\delta_1 >0$, and by the strict complementarity assumption we have that $\delta_2 >0$. 
	
	Since, from Theorem~\ref{thm:qlinear} we have that $\{x^k\} \rightarrow x^*$, there exists an integer $k_0$ such that for any $k\geq k_0$ we have 
	\begin{align}
		x^k_i < -\delta_1 \ \ \forall i \in N^*, \quad x^k_i > \delta_1 \ \ \forall i \in P^* \notag \\
		 |x^k_i| < \frac{\alpha \delta_2 }{2} \ \ \forall i \in Z^* \label{secondIneqq} \\
		|g_i| < \tau - \delta_2 \ \ \forall i \in Z^*. \label{thirdIneqq}  
	\end{align}
	
	For all such $k$, we have, by~\eqref{thirdIneqq} that $\omega=0$, which implies the behavior of iiCG-1 and iiCG-2 is identical.  
	
	Thus, all variables that are positive at the solution will be positive for $k > k_0$; and similarly for all negative variables. For the rest of the variables, we consider the ISTA step,
	\[ x^{k+1} = \max \{ |x^k - \alpha  g | - \alpha  \tau, 0 \} \sgn (x^k - \alpha  g ). \]
	Using~\eqref{secondIneqq} and~\eqref{thirdIneqq}, we have that for any $k \geq k_0$ and $i \in Z^*$, 
	\begin{align*}
		| x_i^k - \alpha  g_i | - \alpha  \tau& \leq | x_i^k |+ |\alpha  g_i | - \alpha  \tau\\
		& \leq\frac{\alpha \delta_2 }{2}+ \alpha  ( \tau - \delta_2) - \alpha  \tau\\
		& = - \frac{\alpha \delta_2 }{2} < 0 . 
	\end{align*}
	Therefore, for all $i \in Z^*$ and all $k \geq k_0$, the ISTA step sets $x^{k+1}=0$.
	
	An ISTA step must be taken within $n$ iterations of $k_0$, because of the finite termination property of the conjugate gradient algorithm. Therefore there exists a $k_1$ such that for any $k \geq k_1$, $\sgn(x^k)=\sgn(x^*)$, and by~\eqref{thirdIneqq}, $\omega=0$. These two facts imply that for $ k \geq k_1$, once the algorithm enters the CG iteration it will not leave, since the two {\tt break} conditions cannot be satisfied. Finite termination of CG implies the optimal solution $x^*$ will be found in a finite number of iterations. 
\end{proof}

		\section{Numerical Results} \label{numer} \setcounter{equation}{0}
          
          We developed a MATLAB implementation\footnote{Available at \url{https://github.com/stefanks/Ql1-Algorithm}} of algorithms
		iiCG-1 and iiCG-2, and in the next subsection we compare their performance relative to two well known proximal gradient methods. This allows us to study the algorithmic components of our methods in a controlled setting, and to identify their most promising characteristics. Then we compare, in Section~\ref{establish}, our algorithms to four state-of-the-art codes for solving problem~\eqref{prob}. Our numerical experiments are performed on four groups of test problems with varying characteristics.  

		Before describing the numerical tests, we introduce the following heuristic that improves the prediction made by ISTA steps. As suggested by Wright et al.~\cite{sparsa}, the Barzilai-Borwein stepsize with a non-monotone line-search is usually preferable to a constant stepsize scheme, such as~${\alpha }$ in algorithms~iiCG-1 and~iiCG-2. Thus Line~3 in~iiCG-1 and Lines~4 and~6 in~iiCG-2 are replaced by the following procedure, where we now write $\psi$ in the form $\psi(x^k;\alpha _{ B} )$ to make its dependence on the steplength $\alpha_B$ clear.
 
		\bigskip 
		\textbf{ISTA-BB-LS Step} 
		\begin{algorithmic}
			[1] \STATE $\alpha _{ B} = \fraction{{(x^k-x^{k-1})}^T(x^k-x^{k-1})}{{(x^k-x^{k-1})}^TA(x^k-x^{k-1})}$
			 \REPEAT 
			 			 \STATE$x_F= x^k - \alpha _{ B} \omega(x^k) - \alpha _{ B} \psi(x^k;\alpha _{ B} ) $ (when the ISTA step is invoked) \\ \quad or \\
						 $x_F= x^k - \alpha _{ B} \psi(x^k;\alpha _{ B} ) $ (when the reduced ISTA step is invoked)
			 \STATE $\alpha _B=\fraction{\alpha _B}{2}$ 
			 \UNTIL {$F(x_F) \leq \max_{i \in \{1\ldots M\}} F^i - \alpha _B \xi \|x-x_F \|^2 $} 
			 \STATE $F^{i+1} =F^i$ for all $i \in \{1\ldots M-1 \}$ 
			 \STATE $F^1 =F(x_F)$ 
			 \STATE $x^{k+1}=x_F$ 
		\end{algorithmic}
		\medskip 
		\noindent
		
		At the beginning of the overall algorithm, we initialize $M=5$, $\xi = 0.005$, and $F^i= F(x^0)$ for $i \in \{1\ldots M\}$. The choice of parameters $M, \xi$ is as in~\cite{sparsa}; we did not attempt to fine tune them to our test set. 

		\subsection{Initial Evaluation of the Two New Algorithms} 
		We implemented the following methods. 
		\begin{itemize}
			\item[] \textbf{iiCG-1}
			\item[] \textbf{iiCG-2}
			\item[] \textbf{FISTA} The Fast Iterative Shrinkage-Thresholding Algorithm~\cite{fista}, using a constant stepsize given by $1/L$. 
			\item[] \textbf{ISTA-BB-LS} This method is composed purely of the ISTA-BB-LS steps described at the beginning of this section, which are repeated until convergence. 
		\end{itemize}
	
		These four methods allow us to perform a per-iteration comparison of the progress achieved by each method. {\sc FISTA} and  {\sc ISTA-BB-LS}  are known to be efficient in practice and serve as a useful benchmark. In algorithms iiCG-1 and iiCG-2 we set $c=10^{-4}$, and set 
		$\alpha  = 1/L$ in~\eqref{phitildedef} when computing the value of $\psi (x^k)$ used in the gradient balance condition~\eqref{gamma}. As illustrated in Appendix~\ref{appendix:alphahat}, our algorithms are fairly insensitive to the choice of this parameter. 

		The first three test problems have the following form, which is sometimes called the elastic net regularization problem~\cite{zou_regularization_2005}, 
		\begin{equation} 
			\min_x \fraction{1}{2}\|y-B x \|^2 +\gamma \| x\|^2 + \tau \| x\|_1 . 
			\label{elasticnet}
		\end{equation}
		The data $y$ and $B$ was obtained from three different data sets that we call {\tt spectra}, {\tt sigrec}, and {\tt myrand}. The sources of these data sets are as follows. 

		\medskip 
		\noindent\textbf{Spectra}. The gasoline spectra problem is a regularized linear regression problem~\cite{kalivas_two_1997}; it is available in MATLAB by typing \texttt{load spectra}. This problem has a slightly different form than~\eqref{elasticnet} in the sense that $\ell_1$ regularization is imposed on all but one of the variables (which represents the constant term in linear regression). 

		\medskip 
		\noindent\textbf{Sigrec}. This signal recovery problem is described by Wright et al.~\cite{sparsa}. The authors generate random sparse signals, introduce noise, and encode the signals in a lower dimensional vector by applying a random matrix multiplication. We generated an instance using the code by the authors of~\cite{sparsa}.

		\medskip 
		\noindent\textbf{Myrand}. We generated a random $2000$ variable problem using the following MATLAB commands 
		\begin{verbatim}
			B = randn(1000,2000); y = 2000*randn(1000,1), 
		\end{verbatim}
		and employed this matrix and vector in~\eqref{elasticnet}. \medskip

		\medskip 
		The 4th problem in our test set is of the form 
		\begin{equation}
			 \min_x \fraction{1}{2}x^T Cx -y^Tx +\gamma{ \| x \|}^2 + \tau{ \| x \|}_1 . 
			\label{prox-prob}
		\end{equation}
		\noindent\textbf{Proxnewt}. The data for~\eqref{prox-prob}  was generated by applying the proximal Newton method described in~\cite{istanbul} to an $\ell_1$ regularized convex optimization problem of the form $\varphi(x) + \tau \|x\|_1$, where $\varphi(x)$ is a logistic regression function and the data is given by the {\tt gisette} test set in the LIBSVM repository~\cite{CC01a}. Each iteration of the proximal Newton method computes a step by solving a subproblem of the form~\eqref{prob}. We extracted one of these subproblems, and added the $\ell_2$ regularization term $\gamma \|x\|^2$ to yield a problem of the form~\eqref{prox-prob}.

		\bigskip 

		We created twelve versions of each of the four problems listed above, by choosing different values of $\gamma$ and $\tau$. This allowed us to create problems with various degrees of ill conditioning and different percentages on non-zeros in the solution. In our datasets, the matrices $B$ and $C$ in~\eqref{elasticnet} and~\eqref{prox-prob} are always rank deficient; therefore, when $\gamma=0$ the Hessians of $f(x)$ are singular.

		The following naming conventions are used. The last digit, as in problems

		\smallskip\centerline{ spectra{s}{\bf 1}, $\cdots$ , spectra{s}{\bf 4}, } \smallskip 
		\noindent indicates one of the four values of $\tau$ that were chosen for each problem so as to generate different percentages of non-zeros in the optimal solution. The degree of ill conditioning, which is controlled by $\gamma$, is indicated in the second-to-last character, as in 

		\centerline{ spectra{\bf s}{1}, \, spectra{\bf i}{1}, \, spectra{\bf m}{1}, } \smallskip 
		\noindent which correspond to the \textbf{s}ingular, \textbf{i}ll conditioned and \textbf{m}oderately conditioned versions of the problem. The characteristics of the test problems are given in Appendix~\ref{appendix:sparsitypatterns}. 

		Accuracy in the solution is measured by the ratio 
		\begin{equation}
			{\tt tol} =  \frac{F(x^k) - F^*}{|F^*|} , 
			\label{accu} 
		\end{equation}
		where $F^*$ is the best known objective value for each problem. Given the nature of the four algorithms listed above, it is easy to compute and report the ratio~\eqref{accu} after each matrix-vector product computation. We initialized $x^0$ to the zero vector, and imposed a limit of 50,000 matrix-vector products on all the runs.

		In Table~\ref{bigtable} we report the results for iiCG-1, iiCG-2, FISTA and ISTA-BB-LS on all the test problems. 
Matrix-Vector product (MV) counts are a reasonable measure of computational work used by these algorithms since they are by far the costliest operations. All algorithms are terminated as  soon as the ratio in~\eqref{accu} is less than  a prescribed constant;
in Table~\ref{bigtable} we report results for {\tt tol}$ =10^{-4}$, and {\tt tol} $=10^{-10}$. 
		Dashes signify failures to find a solution after 50,000 matrix-vector products, and bold numbers mark the best-performing algorithm. 
		
		We observe from these tables that problems with an intermediate value of $\tau$ (typically) require the largest effort. This suggests that the values of $\tau$ chosen in our tests gave rise to an interesting collection of problems that range from nearly quadratic to highly regularized piecewise quadratic, with the most challenging problems in the middle range. An analysis of the data given in Table~\ref{bigtable} indicates that iiCG-2 is the most efficient in these tests, but not uniformly so. Overall, we regard both iiCG-1 and iiCG-2 as promising methods for solving the regularized quadratic problem~\eqref{prob}. 

		\begin{table}
			\caption{Number of matrix-vector products to reach accuracies {\tt tol} $ =10^{-4}$ and $ =10^{-10}$.}
			{
				\begin{tabular}{|c|cccc|cccc|}
					\hline
					 & \multicolumn{4}{|c|}{{\tt tol} $ =10^{-4}$} &  \multicolumn{4}{|c|}{{\tt tol} $ =10^{-10}$}\\
					               & iiCG-1 & iiCG-2 & FISTA& ISTA-BB-LS& iiCG-1 & iiCG-2 & FISTA& ISTA-BB-LS\\
					 \hline
		{\tt myrands1} & 654 & 297 & \textbf{248} & 1311  & 13614 & \textbf{8102} & 8511 & - \\ 
		{\tt myrands2} & 513 & 310 & \textbf{163} & 547  & 3228 & \textbf{1885} & 4748 & 12782 \\ 
		{\tt myrands3} & 118 & 123 & \textbf{69} & 86  & 398 & \textbf{311} & 1493 & 540 \\ 
		{\tt myrands4} & 11 & 12 & 21 & \textbf{8}  & 18 & 20 & 106 & \textbf{17} \\ 
		{\tt myrandi1} & \textbf{14} & \textbf{14} & 31 & 19  & - & - & \textbf{26183} & - \\ 
		{\tt myrandi2} & 491 & 297 & \textbf{163} & 498  & 3008 & \textbf{1912} & 4925 & 13682 \\ 
		{\tt myrandi3} & 111 & 116 & \textbf{69} & 86  & 352 & \textbf{335} & 1492 & 596 \\ 
		{\tt myrandi4} & 11 & 12 & 21 & \textbf{8}  & 18 & 20 & 106 & \textbf{17} \\ 
		{\tt myrandm1} & \textbf{14} & \textbf{14} & 30 & 19  & \textbf{57} & \textbf{57} & 236 & 726 \\ 
		{\tt myrandm2} & 121 & \textbf{108} & 111 & 317  & 1731 & \textbf{728} & 2437 & 5483 \\ 
		{\tt myrandm3} & 117 & 128 & \textbf{68} & 86  & 375 & \textbf{359} & 1433 & 466 \\ 
		{\tt myrandm4} & 11 & 12 & 21 & \textbf{8}  & 18 & 19 & 106 & \textbf{17} \\
		{\tt spectras1 }& \textbf{4} & \textbf{4} & 265 & 17  & - & \textbf{45888} & - & - \\  
		{\tt spectras2 }& \textbf{4} & \textbf{4} & 264 & 20  & 48200 & \textbf{8656} & - & - \\  
		{\tt spectras3 }& \textbf{4} & \textbf{4} & 263 & 26  & 5661 & \textbf{2245} & - & - \\  
		{\tt spectras4 }& \textbf{4} & \textbf{4} & 270 & 22  & 30896 & \textbf{9170} & - & - \\  
		{\tt spectrai1 }& \textbf{4} & \textbf{4} & 258 & 23  & \textbf{42} & \textbf{42} & 29258 & 12046 \\  
		{\tt spectrai2 }& \textbf{4} & \textbf{4} & 257 & 26  & 159 & \textbf{129} & 33734 & - \\  
		{\tt spectrai3 }& \textbf{4 }& \textbf{4} & 256 & 19  & 2246 & \textbf{2205} & 45339 & - \\  
		{\tt spectrai4 }& \textbf{60} & 105 & 1036 & 4192  & 1898 & \textbf{1751} & 10030 & 23579 \\  
		{\tt spectram1 }& \textbf{2} & \textbf{2} & \textbf{2} & \textbf{2}  & \textbf{10} & \textbf{10} & 1897 & 17 \\  
		{\tt spectram2 }& \textbf{2} & \textbf{2} &\textbf{ 2} & \textbf{2}  & 15 & \textbf{12} & 2024 & 137 \\  
		{\tt spectram3 }& \textbf{5 }& \textbf{5 }& 51 & 7  & \textbf{11} & \textbf{11} & 1445 & 163 \\  
		{\tt spectram4 }& \textbf{100} & \textbf{100} & 126 & 175  & \textbf{107} & \textbf{107} & 4799 & 545 \\  
		{\tt sigrecs1 }& 2206 & 1213 & \textbf{446} & 3522 & 3635 & 2283 & \textbf{1338} & 6687 \\  
		{\tt sigrecs2 }& 1020 & 589 & \textbf{291} & 1494 & 1191 & \textbf{695} & 696 & 1737 \\  
		{\tt sigrecs3 }& 105 & 94 & 75 & \textbf{72} & 120 & 116 & 296 & \textbf{86} \\  
		{\tt sigrecs4 }& 11 & 12 & 25 & \textbf{8} & 19 & 20 & 145 & \textbf{16} \\  
		{\tt sigreci1 }& 8 & 8 & 14 & 10 & - & 5415 & 10542 & - \\  
		{\tt sigreci2 }& 2148 & 1156 & \textbf{442} & 3515 & 3526 & 2190 & \textbf{1350} & 6955 \\  
		{\tt sigreci3 }& 1095 & 532 & \textbf{291} & 1499 & 1285 & \textbf{637} & 659 & 1728 \\  
		{\tt sigreci4 }& 11 & 12 & 25 & \textbf{8} & 19 & 20 & 145 & \textbf{16} \\  
		{\tt sigrecm1 }& \textbf{8} & \textbf{8} & 14 & 10 & \textbf{51} & \textbf{51} & 114 & 386 \\  
		{\tt sigrecm2 }& 65 & \textbf{60} & 61 & 101 & 777 & \textbf{297} & 864 & 1138 \\  
		{\tt sigrecm3 }& 199 & 137 & \textbf{103} & 229 & 476 & \textbf{365} & 1301 & 504 \\  
		{\tt sigrecm4 }& 11 & 12 & 25 & \textbf{8} & 18 & 19 & 144 & \textbf{16} \\  
		{\tt proxnewts1}& 22739 & \textbf{4077} & 21173 & - & 40139 & \textbf{10169} & - & - \\  
		{\tt proxnewts2}& 9522 & 6871 & \textbf{6423} & 38233 & 15782 & \textbf{8436} & - & - \\  
		{\tt proxnewts3}& 6463 & \textbf{1865} & 2026 & 3659 & 7092 & \textbf{2098} & - & 8384 \\  
		{\tt proxnewts4}& 212 & \textbf{191} & 1582 & 311 & 233 & \textbf{213} & - & 458 \\  
		{\tt proxnewti1}& 294 & \textbf{336} & 2795 & 49932 & 3267 & \textbf{1100} & - & - \\  
		{\tt proxnewti2}& 2274 & \textbf{1384} & 2212 & 14029 & 3519 & \textbf{1983} & - & 24756 \\  
		{\tt proxnewti3}& 6076 & \textbf{1437} & 1702 & 3027 & 6585 & \textbf{1647} & - & 6344 \\  
		{\tt proxnewti4}& 291 & \textbf{160} & 1499 & 296 & 315 & \textbf{175} & - & 463 \\  
		{\tt proxnewtm1}& \textbf{32} & \textbf{32} & 881 & 190 & 131 & \textbf{112} & 20319 & 2382 \\  
		{\tt proxnewtm2}& 41 & \textbf{36} & 784 & 293 & 139 & \textbf{101} & 14310 & 1369 \\  
		{\tt proxnewtm3}& 237 & \textbf{232} & 592 & 492 & \textbf{262} & 274 & 12640 & 720 \\  
		{\tt proxnewtm4}& 58 & \textbf{50} & 472 & 101 & 70 & \textbf{58} & 10380 & 128 \\  
					\hline
			    \end{tabular}
			}
			\label{bigtable}
		\end{table}

		Using the data from Table~\ref{bigtable}, we illustrate in Figure~\ref{fig:Res2} the relative performance of iiCG-1, iiCG-2, FISTA and ISTA-BB-LS, using the Dolan-Mor\'e profiles~\cite{DolMor01} (based on the number of matrix-vector multiplications required for convergence). While iiCG-1 and iiCG-2 are efficient in the case {\tt tol} $= 10^{-4}$, iiCG-2 demonstrates superior performance in reaching the higher accuracy.
		\begin{figure}
			 \scriptsize \centering 
			\begin{subfigure}
				[b]{0.49 
				\textwidth} \includegraphics[scale=0.4]{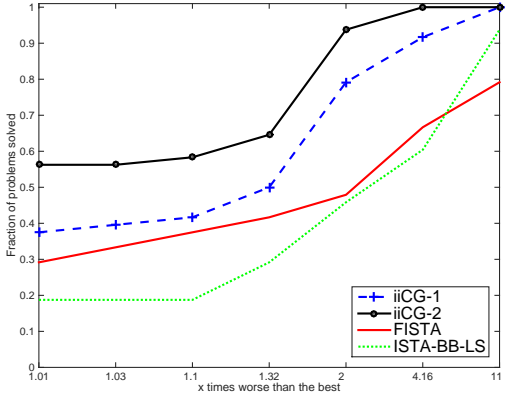} \caption{tol = 1e-4} 
			\end{subfigure}
			\begin{subfigure}
				[b]{0.49 
				\textwidth} \includegraphics[scale=0.4]{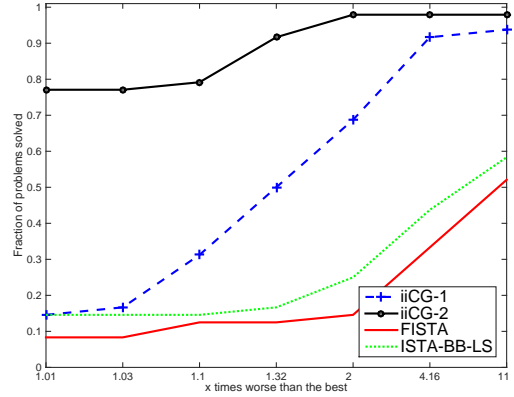}\caption{tol = 1e-10} 
			\end{subfigure}
			\caption{Comparison of the four algorithms using the logarithmic Dolan-Mor\'e profiles, based on the number of matrix-vector products required for convergence. We report results for low and high accuracy~\eqref{accu} in the objective function.} \label{fig:Res2} 
		\end{figure}

		In Figure~\ref{fig:Res3} we illustrate typical behavior of iiCG-1 and iiCG-2 by means of problems {\tt proxnewts3} and {\tt spectram4}. We plot the accuracy in the objective~\eqref{accu} as a function of of matrix-vector multiplications.  Both plots show that our algorithms are able to estimate the solution to high accuracy. They sometimes outperform the other methods from the very start, as in Figure~\ref{prsp1}, but in other cases iiCG-1 and iiCG-2 show their strength later on in the runs; see Figure~\ref{prsp2}. We note that the ISTA-BB-LS method is more efficient than FISTA when high accuracy in the solution is required. 

		\begin{figure}
			 \scriptsize \centering 
			\begin{subfigure}
				[b]{0.49 
				\textwidth} \includegraphics[scale=0.4]{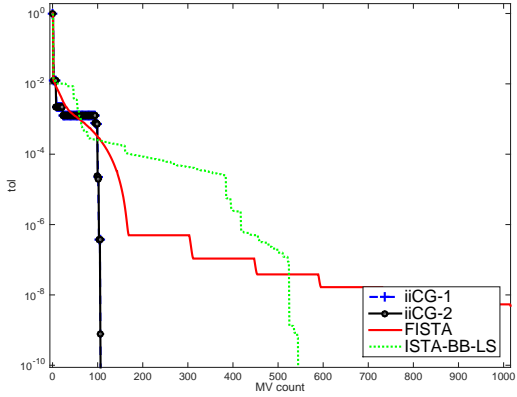} \caption{Problem {\tt spectram4}} \label{prsp1} 
			\end{subfigure}
			\begin{subfigure}
				[b]{0.49 
				\textwidth} \includegraphics[scale=0.4]{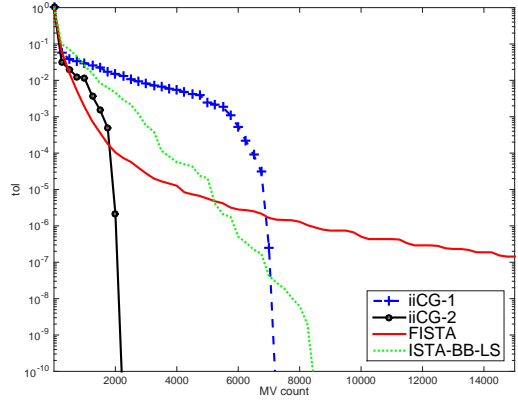} \caption{Problem {\tt proxnewts3}} \label{prsp2} 
			\end{subfigure}
			\caption{Accuracy~\eqref{accu} in the objective function (vertical axis) as a function of the number of matrix-vector products performed (MV count) performed by each algorithm} \label{fig:Res3} 
		\end{figure}
		
		\newpage

		We have observed that Algorithm iiCG-2 is superior to iiCG-1 in identifying sparse solutions, due to its judicious application of the subspace ISTA iteration. To illustrate this, we plot in Figure~\ref{fig:superSparsity} the Pareto frontier based on two quantities: the accuracy \eqref{accu} in the solution, and the number of nonzero elements in the solution. Specifically, we ran the test problems, {\tt spectras2} and {\tt myrands2} until a specified  limit of matrix-vector (MV) products was computed (500, 2100, 30000 for {\tt spectras2}  and 500, 1000, 2000 for {\tt myrands2}). For each run, all iterates were considered, and we plotted the pairs (accuracy-nonzeros) such that no other pair existed with both higher accuracy and sparsity. As expected, when more effort (MV) is allowed, higher accuracy is achieved, but not necessarily higher sparsity. As measured by the two quantities depicted in Figure~\ref{fig:superSparsity}, iiCG-2 finds better solutions.
		
		\begin{figure}
			 \scriptsize \centering 
			\begin{subfigure}
				[b]{0.49 
				\textwidth} \includegraphics[scale=0.4]{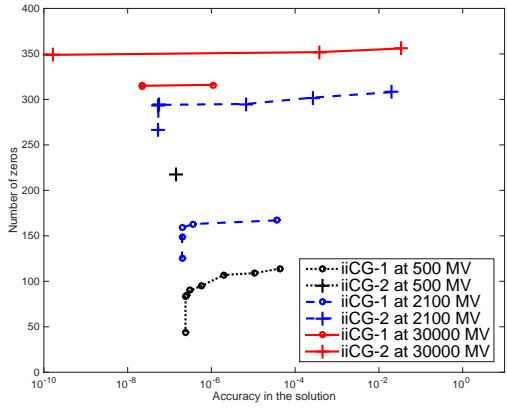} \caption{On problem {\tt spectras2} }
			\end{subfigure}
			\begin{subfigure}
				[b]{0.49 
				\textwidth} \includegraphics[scale=0.4]{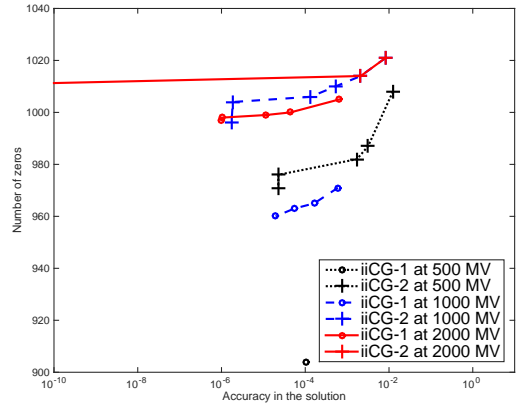}\caption{On problem {\tt myrands2}} 
			\end{subfigure}
			\caption{Pareto frontier based on the number of nonzeros in the incumbent solution (vertical axis) and accuracy in the solution \eqref{accu} (horizontal axis), for runs imposing 3 limits on the maximum number of matrix-vector (MV) products. The test problems,
			{\tt spectras} and {\tt myrands2}, represent typical behavior of the algorithms.} \label{fig:superSparsity} 
		\end{figure}

		\newpage
		
		\subsection{Analysis of the CG Phase}

		We now discuss the behavior of the subspace CG phase, which has a great impact on the overall efficiency of the proposed algorithms. In Figure~\ref{fig:Res4} we report data for two representative runs of iiCG-2 given by test problems {\tt myrandm1} and {\tt sigreci4}. The horizontal axis labels each of the subspace phases invoked by the algorithm, and the vertical axis gives the number of CG iterations performed during that subspace phase.

		\bigskip 
		\begin{figure}[htp]
			 \scriptsize \centering 
			\begin{subfigure}
				[b]{0.49 
				\textwidth} \includegraphics[scale=0.7]{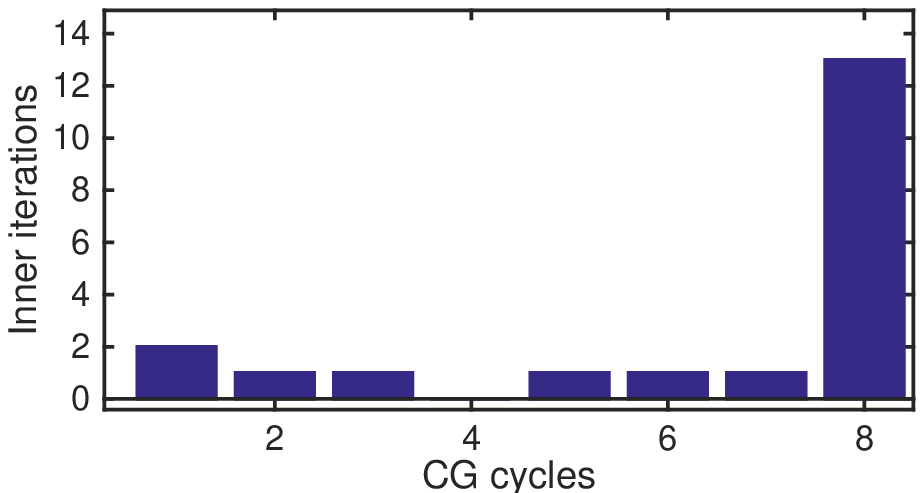} \caption{iiCG-2 on {\tt sigreci4} } \label{fig:Res42} 
			\end{subfigure}
			\begin{subfigure}
				[b]{0.49 
				\textwidth} \includegraphics[scale=0.7]{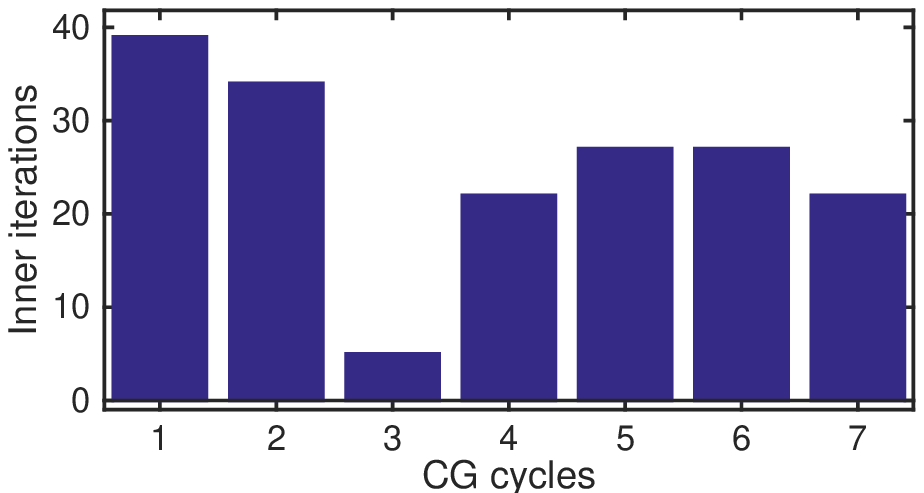}\caption{iiCG-2 on {\tt myrandm1}} \label{fig:Res41} 
			\end{subfigure}
			\caption{Number of CG iterations in each subspace phase} \label{fig:Res4} 
		\end{figure}
		\bigskip 

		Figure~\ref{fig:Res42} illustrates a behavior that is often observed for moderate and large values of the penalty parameter $\tau$, namely that the bulk of the CG iterations are performed towards the end of the run. This is desirable, as the CG phase makes a moderate contribution earlier on towards identifying the optimal active set, and is then able to compute a highly accurate solution of the problem in one (or two) CG cycles. Figure~\ref{fig:Res41}, considers the case when the penalty parameter $\tau$ is very small, i.e., when $F$ is nearly quadratic. We observe now that the effort expended by the CG phase is more evenly distributed throughout the run. It is reassuring that the number of CG iterations does not tail off for this problem, and that a significant number of CG steps is performed in the last cycle, yielding an accurate solution to the problem. 

		\subsection{Comparisons with Established Codes}   \label{establish}

		We also performed comparisons with the following four state-of-the-art codes. To facilitate our comparisons, and ease of implementation, we only considered codes written in MATLAB. 
		\begin{itemize}
			\item \textbf{SPARSA} This is the well known implementation of the {\sc ISTA} method described in~\cite{sparsa}. The code can be found at \url{http://www.lx.it.pt/~mtf/SpaRSA/} 
			\item \textbf{PSSgb} The  algorithm implemented in this code is motivated by the two-metric projection method~\cite{gafni1984} for bound constrained optimization. That method is extended to the regularized $\ell_1$ problem; curvature information is incorporated in the form of a BFGS matrix. \url{http://www.di.ens.fr/~mschmidt/Software/thesis.html} 
			\item \textbf{N83} Is one of the codes provided by the TFOCS package~\cite{becker2011templates}. It implements the optimal first order Nesterov method described in~\cite{nesterov83}. \url{http://cvxr.com/tfocs/download/} 
			\item \textbf{pdNCG} A Newton-CG method in which
			the $\ell_1$ norm is approximated by a smooth function~\cite{FountoulakisGondzio2014}. \url{http://www.maths.ed.ac.uk/~kfount/index.html}
		\end{itemize}
		We also experimented with SALSA~\cite{salsa}, TWIST~\cite{twist} and FPC\_AS~\cite{hale_fixed-point_2007}, l1\_ls~\cite{l1ls}, YALL1~\cite{YALL1}, but these codes were not competitive on our test set with the four packages mentioned above.

		In Figure~\ref{fig:dm} we compare our algorithms with the four codes listed above on all test problems. The figure plots the Dolan-Mor\'e performance profiles based on CPU time; we report results for two values of the convergence tolerance~\eqref{accu}. Figure~\ref{fig:dm} indicates that PSSgb is the closest in performance compared to our algorithms. 
		\begin{figure}[htp]
			 \centering \scriptsize 
			\begin{subfigure}
				[b]{0.49 
				\textwidth} \includegraphics[scale=0.4]{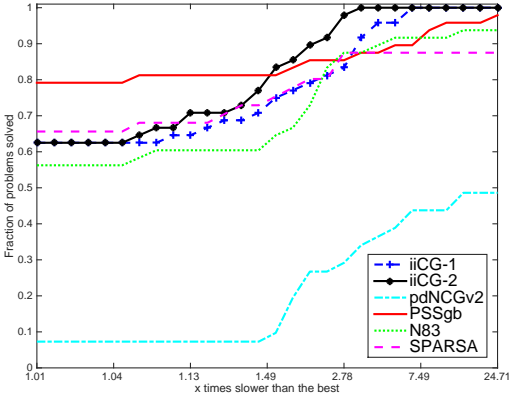} \caption{${\tt tol}=10^{-4}$}   
			\end{subfigure}
			\begin{subfigure}
				[b]{0.49 
				\textwidth} \includegraphics[scale=0.4]{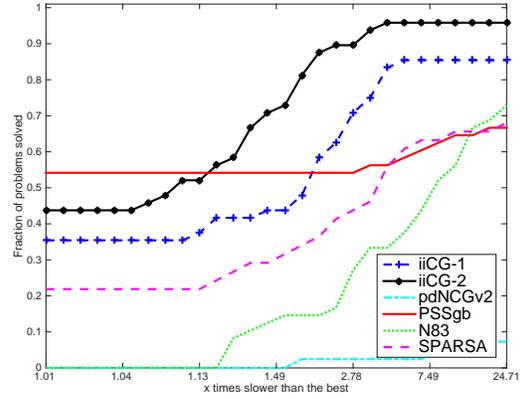}\caption{${\tt tol}=10^{-10}$} 
			\end{subfigure}
			\caption{Comparison of iiCG-1, iiCG-2, SPARSA, N83 and PSSgb. The figure plots the logarithmic Dolan-Mor\'e performance profiles based on CPU time for problems {\tt spectra}, {\tt sigrec} and {\tt myrand}. } \label{fig:dm} 
		\end{figure}

		We now examine the behavior of the codes for intermediate values of accuracy. Figure~\ref{efficc} shows the fraction of problems that a method was able to solve faster than the other methods, as a function of the accuracy measure~\eqref{accu}, in a log-scale.

		\begin{figure}
		\centering
			\includegraphics[scale=0.4]{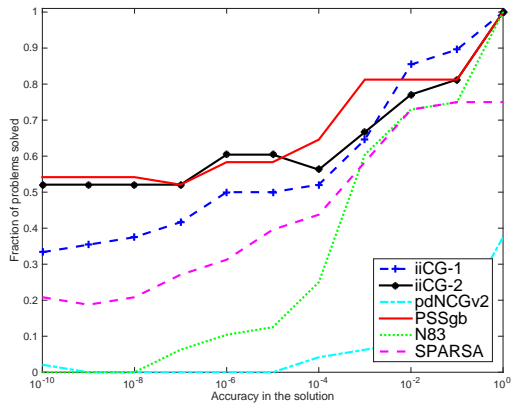} 
				\caption{Efficiency. For given accuracy in the function value (horizontal axis), the plot shows the percentage of problems solved to that accuracy within 10\% of the time of the best method.} 
				 \label{efficc} 
		\end{figure}

\newpage
		Some algorithms are better suited for problems with a very high solution sparsity. In Figure~\ref{fig:spar}, we show the Dolan-Mor\'e profiles for two sets of problems; one with low sparsity in the solution, and one with high sparsity. The first set of test problems is composed of problems styled \emph{name1} (average solution sparsity 25\%), and the second is made of problems styled \emph{name4} (average solution sparsity 95\%). We observe that iiCG-2 outperforms the other codes for low solution sparsity, and that PSSgb is competitive for high sparsity problems. We did not include pdNCGv2 and SPARSA since they are not competitive with the other algorithms, in these tests.

Overall, our iiCG methods are competitive with the four state-of-the-art codes in these tests.

		\begin{figure}[htp]
			 \scriptsize \centering 
			\begin{subfigure}
				[b]{0.49 
				\textwidth} \includegraphics[scale=0.4]{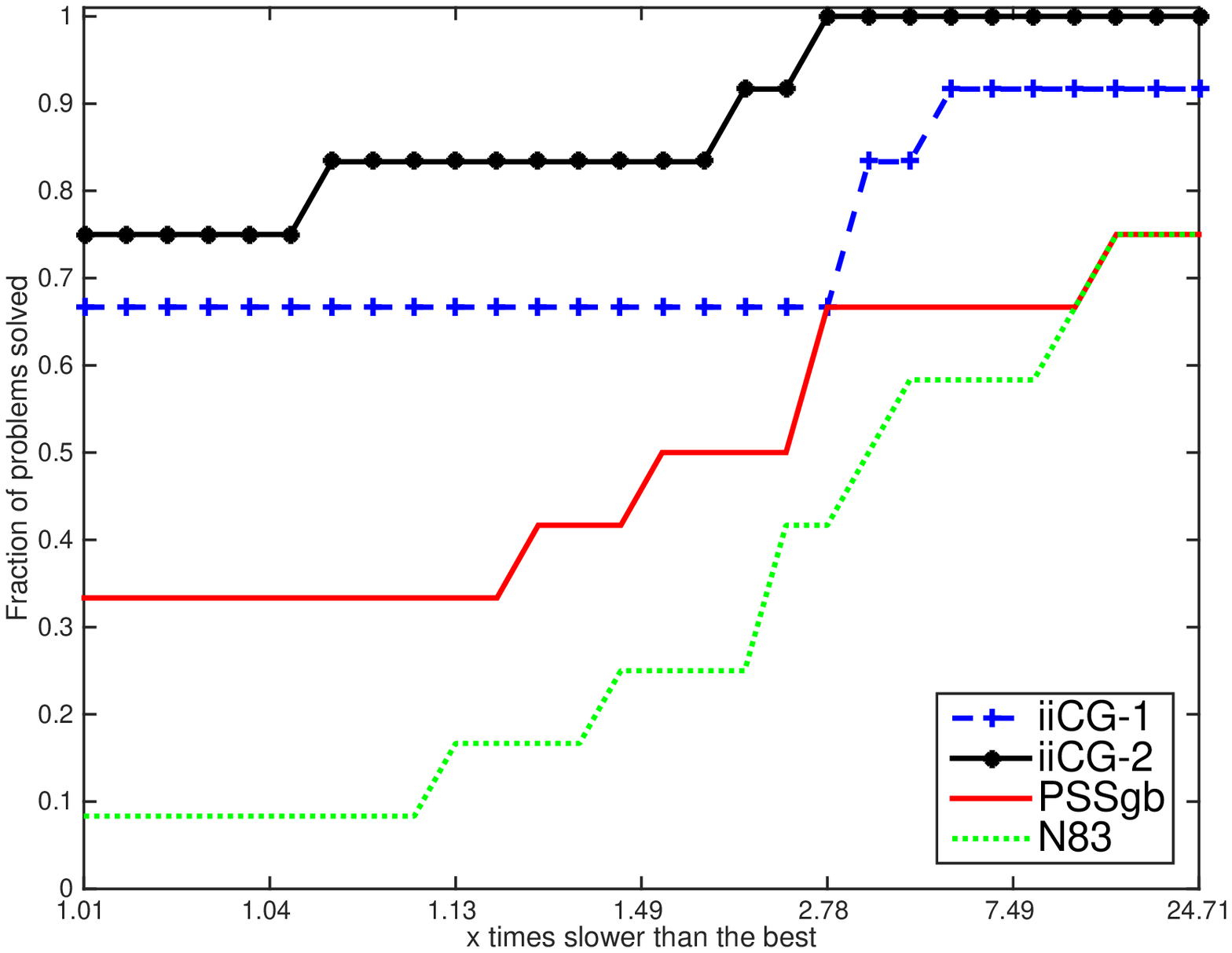} \caption{Low Sparsity} 
			\end{subfigure}
			\begin{subfigure}
				[b]{0.49 
				\textwidth} \includegraphics[scale=0.4]{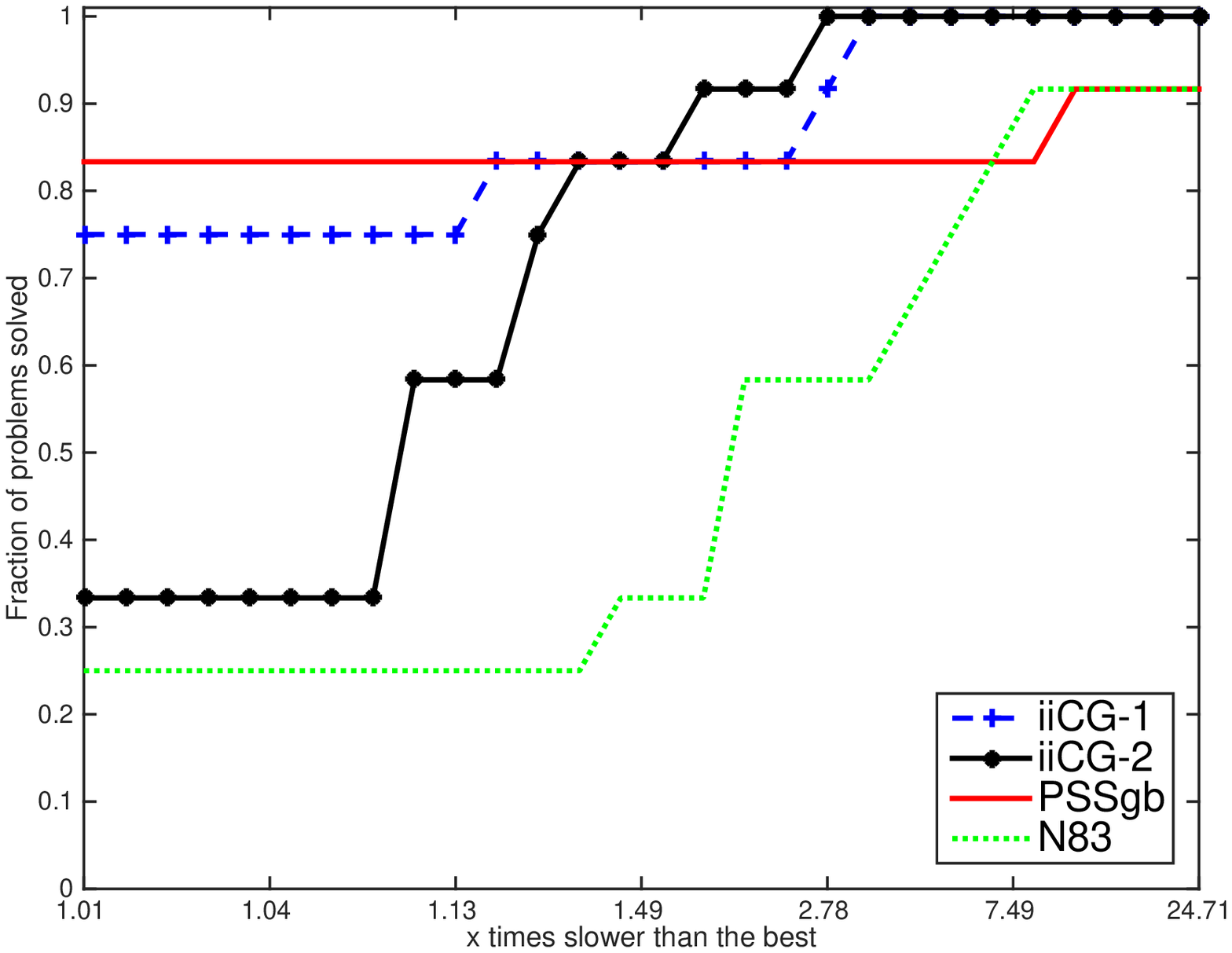}\caption{High Sparsity} 
			\end{subfigure}
			\caption{Comparison of four algorithms using the logarithmic Dolan-Mor\'e profiles, for problems with: low sparsity (Figure a), and high sparsity (Figure b). We set $tol=10^{-7}$} \label{fig:spar} 
		\end{figure}

\section{Final Remarks} \label{remarks}

In this paper, we presented a second-order method for solving the $\ell_1$ regularized quadratic problem~\eqref{prob}. We call the method iiCG, for ``interleaved ISTA-CG'' method. It differs from other second-order methods proposed in the literature in that it can invoke one of two possible steps at each iteration: a subspace conjugate gradient step or a first order active-set identification step. This flexibility is designed to allow the algorithm to adapt itself to the problem to be solved, and the results presented in the paper suggest that it is generally successful at achieving this goal. The decision of what type of step to invoke is based on a measure related to the relative components of the minimum norm subgradient --- an idea proposed by Dostal and Schoeberl~\cite{dostal_minimizing_2005} for the solution of bound constrained quadratic optimization problems. But unlike~\cite{dostal_minimizing_2005}, our algorithm does not include a  step along the direction $-\omega(x^k)$ in order to relax zero variables; as a result our approach departs significantly from the methodology given in~\cite{dostal_minimizing_2005}.

We presented two variants of our method that differ in the first-order active set identification phase. One option uses ISTA, and the other a subspace ISTA iteration.

To provide a theoretical foundation for our method, we established global rates of convergence and complexity bounds based on the total amount of work expended by the algorithm (measured by the total number of matrix-vector products performed). We also gave careful consideration to the main design components of the algorithm,  such as the definition of the vectors $\omega, \psi$ employed in the gradient balance condition~\eqref{gamma}. One of the features of the algorithm that was particularly successful is allowing the CG iteration to cross orthants as long as sufficient decrease in the objective function is achieved. The numerical tests reported in this paper suggest that the algorithm proposed in this paper is competitive with state-of-the-art codes. 

\bibliographystyle{plain} 
\bibliography{paper.bbl}

\begin{thebibliography}{10}

\bibitem{salsa}
M.V. Afonso, J.M. Bioucas-Dias, and M.~A~T Figueiredo.
\newblock Fast image recovery using variable splitting and constrained
  optimization.
\newblock {\em Image Processing, IEEE Transactions on}, 19(9):2345--2356, 2010.

\bibitem{andrew2007scalable}
G.~Andrew and J.~Gao.
\newblock Scalable training of ${L}_1$-regularized log-linear models.
\newblock In {\em Proceedings of the 24th international conference on Machine
  Learning}, pages 33--40. ACM, 2007.

\bibitem{fista}
A.~Beck and M.~Teboulle.
\newblock A fast iterative shrinkage-thresholding algorithm for linear inverse
  problems.
\newblock {\em SIAM Journal on Imaging Sciences}, 2(1):183--202, 2009.

\bibitem{becker2011templates}
S.~R. Becker, E.~J. Cand\'{e}s, and M.~C. Grant.
\newblock Templates for convex cone problems with applications to sparse signal
  recovery.
\newblock {\em Mathematical Programming Computation}, 3(3):165--218, 2011.

\bibitem{twist}
{J.M.} Bioucas-Dias and {M.A.T.} Figueiredo.
\newblock A new {TwIST:} two-step iterative {Shrinkage/Thresholding} algorithms
  for image restoration.
\newblock {\em {IEEE} Transactions on Image Processing}, 16(12):2992--3004,
  December 2007.

\bibitem{figi}
R.~H. Byrd, G.~M. Chin, J.~Nocedal, and F.~Oztoprak.
\newblock A family of second-order methods for convex {L1} regularized
  optimization.
\newblock Technical report, Optimization Center Report 2012/2, Northwestern
  University, 2012.

\bibitem{istanbul}
R.H. Byrd, J.~Nocedal, and F.~Oztoprak.
\newblock An inexact successive quadratic approximation method for convex l1
  regularized optimization.
\newblock Technical report, Optimization Center, Northwestern University, 2013.

\bibitem{CC01a}
Chih-Chung Chang and Chih-Jen Lin.
\newblock {LIBSVM}: A library for support vector machines.
\newblock {\em ACM Transactions on Intelligent Systems and Technology},
  2:27:1--27:27, 2011.
\newblock Software available at {http://www.csie.ntu.edu.tw/~cjlin/libsvm}.

\bibitem{Daubechies:04}
I.~Daubechies, M.~Defrise, and C.~De~Mol.
\newblock An iterative thresholding algorithm for linear inverse problems with
  a sparsity constraint.
\newblock {\em Communications on Pure and Applied Mathematics},
  57(11):1413--1457, 2004.

\bibitem{YALL1}
Wei Deng, Wotao Yin, and Yin Zhang.
\newblock Group sparse optimization by alternating direction method.
\newblock {\em {TR11-06}, Department of Computational and Applied Mathematics,
  Rice University}, 2011.

\bibitem{DolMor01}
E.~D. Dolan and J.~J. Mor\'{e}.
\newblock Benchmarking optimization software with performance profiles.
\newblock {\em Mathematical Programming, Series~A}, 91:201--213, 2002.

\bibitem{ista}
D.L. Donoho.
\newblock De-noising by soft-thresholding.
\newblock {\em Information Theory, IEEE Transactions on}, 41(3):613--627, 1995.

\bibitem{dostal_minimizing_2005}
Z.~Dostal and Joachim Schoeberl.
\newblock Minimizing quadratic functions subject to bound constraints with the
  rate of convergence and finite termination.
\newblock {\em Computational Optimization and Applications}, 30(1):23--43,
  2005.

\bibitem{efron_least_2004}
Bradley Efron, Trevor Hastie, Iain Johnstone, and Robert Tibshirani.
\newblock Least angle regression.
\newblock {\em The Annals of statistics}, 32(2), 2004.

\bibitem{FigueiredoNowakWright:07}
M.A.T. Figueiredo, R.D. Nowak, and S.J. Wright.
\newblock Gradient projection for sparse reconstruction: Application to
  compressed sensing and other inverse problems.
\newblock {\em Selected Topics in Signal Processing, IEEE Journal of}, 1(4):586
  --597, dec. 2007.

\bibitem{FountoulakisGondzio2014}
K.~Fountoulakis and J.~Gondzio:.
\newblock A second-order method for strongly-convex l1-regularization problems.
\newblock {\em Technical Report ERGO-14-005}, 2014.

\bibitem{friedman_regularization_2010}
Jerome Friedman, Trevor Hastie, and Rob Tibshirani.
\newblock Regularization paths for generalized linear models via coordinate
  descent.
\newblock {\em Journal of statistical software}, 33(1):1, 2010.

\bibitem{fuchs_more_2004}
Jean~Jacques Fuchs.
\newblock More on sparse representations in arbitrary bases.
\newblock {\em {IEEE} Trans. on {I.T}}, pages 1341--1344, 2004.

\bibitem{gafni1984}
Eli~M. Gafni and Dimitri~P. Bertsekas.
\newblock Two-metric projection methods for constrained optimization.
\newblock {\em {SIAM} Journal on Control and Optimization}, 22(6):936--964,
  1984.

\bibitem{hale_fixed-point}
Elaine~T. Hale, Wotao Yin, and Yin Zhang.
\newblock {\em Fixed-Point Continuation for L1-Minimization: Methodology and
  Convergence}.
\newblock 2007.

\bibitem{hale_fixed-point_2007}
Elaine~T. Hale, Wotao Yin, and Yin Zhang.
\newblock A fixed-point continuation method for l1-regularized minimization
  with applications to compressed sensing.
\newblock {\em {CAAM} {TR07-07}, Rice University}, 2007.

\bibitem{hastie-book}
Trevor Hastie, Robert Tibshirani, and J.~H. Friedman.
\newblock {\em The elements of statistical learning: data mining, inference,
  and prediction: with 200 full-color illustrations}.
\newblock New York: Springer-Verlag, 2001.

\bibitem{kalivas_two_1997}
John~H. Kalivas.
\newblock Two data sets of near infrared spectra.
\newblock {\em Chemometrics and Intelligent Laboratory Systems},
  37(2):255--259, June 1997.

\bibitem{l1ls}
Seung-Jean Kim, K.~Koh, M.~Lustig, S.~Boyd, and D.~Gorinevsky.
\newblock An interior-point method for large-scale l1-regularized least
  squares.
\newblock {\em Selected Topics in Signal Processing, IEEE Journal of},
  1(4):606--617, 2007.

\bibitem{kim_interior-point_2007}
Seung-Jean Kim, K.~Koh, M.~Lustig, Stephen Boyd, and Dimitry Gorinevsky.
\newblock An interior-point method for large-scale l1-control-regularized least
  squares.
\newblock {\em {IEEE} Journal of Selected Topics in Signal Processing},
  1(4):606--617, December 2007.

\bibitem{milzareksemismooth}
A.~Milzarek and M.~Ulbrich.
\newblock A semismooth {N}ewton method with multi-dimensional filter
  globalization for {L}1-optimization.
\newblock {\em SIAM Journal on Optimization}, 24(1):298Ð333, 2014.

\bibitem{nesterov83}
Yurii Nesterov.
\newblock A method for solving the convex programming problem with convergence
  rate o(1/k2).
\newblock {\em Dokl. Akad. Nauk SSSR}, 269:543--547, 1983.

\bibitem{nesterov2004}
Yurii Nesterov.
\newblock {\em Introductory Lectures on Convex Optimization: A Basic Course}.
\newblock Boston: Kluwer Academic Publishers, 2004.

\bibitem{mybook}
J.~Nocedal and S.~J. Wright.
\newblock {\em Numerical Optimization}.
\newblock Springer Series in Operations Research. Springer, second edition,
  2006.

\bibitem{olsen2012newton}
P.~Olsen, F.~Oztoprak, J.~Nocedal, and S.~Rennie.
\newblock Newton-like methods for sparse inverse covariance estimation.
\newblock In {\em Advances in Neural Information Processing Systems 25}, pages
  764--772, 2012.

\bibitem{perkins2003}
Simon Perkins, Kevin Lacker, and James Theiler.
\newblock Grafting: Fast, incremental feature selection by gradient descent in
  function space.
\newblock {\em The Journal of Machine Learning Research}, 3:1333--1356, 2003.

\bibitem{schmidt_thesis}
Mark Schmidt.
\newblock {\em Graphical Model Structure Learning with L1-Regularization}.
\newblock PhD thesis, University of British Columbia, 2010.

\bibitem{schmidt_fast}
Mark Schmidt, Glenn Fung, and Romer Rosales.
\newblock Fast optimization methods for l1 regularization: A comparative study
  and two new approaches suplemental material.

\bibitem{SchmidtProjectedNewton}
Mark Schmidt, Dongmin Kim, and Sra Suvrit.
\newblock Projected newton-type methods in machine learning.
\newblock {\em Optimization for Machine Learning}, 2011.

\bibitem{sra2011optimization}
S.~Sra, S.~Nowozin, and S.J. Wright.
\newblock {\em Optimization for Machine Learning}.
\newblock Mit Pr, 2011.

\bibitem{4839045}
M.J. Wainwright.
\newblock Sharp thresholds for high-dimensional and noisy sparsity recovery
  using -constrained quadratic programming (lasso).
\newblock {\em Information Theory, IEEE Transactions on}, 55(5):2183--2202,
  May.

\bibitem{wen2010fast}
Z.~Wen, W.~Yin, D.~Goldfarb, and Y.~Zhang.
\newblock A fast algorithm for sparse reconstruction based on shrinkage,
  subspace optimization and continuation.
\newblock {\em SIAM Journal on Scientific Computing}, 32(4):1832--1857, 2010.

\bibitem{sparsa}
S.J. Wright, R.D. Nowak, and M.A.T. Figueiredo.
\newblock Sparse reconstruction by separable approximation.
\newblock {\em IEEE Transactions on Signal Processing}, 57(7):2479--2493, 2009.

\bibitem{zou_regularization_2005}
Hui Zou and Trevor Hastie.
\newblock Regularization and variable selection via the elastic net.
\newblock {\em Journal of the Royal Statistical Society: Series B (Statistical
  Methodology)}, 67(2):301--320, 2005.

\end{thebibliography}

\newpage

\section{Dataset Details and Sparsity Patterns} \label{appendix:sparsitypatterns}

The $\tau$ values for each problem were chosen by experimentation so as to span a range of solution sparsities. This is preferable to setting $\tau$ as a multiple of $\|b\|_\infty$ (as is often done in the literature based on the fact when $\tau = \|b\|_\infty$ the optimal solution to problem~\eqref{prob} is the zero vector~\cite{fuchs_more_2004}). We prefer to select the value of $\tau$ for each problem, as there sometimes is a very small range of values  that yields interesting problems.
\begin{table}
	 \caption{{\tt myrand} $n=2000$}
	{\begin{tabular}{|c|c|c|c|c|c|} 
		\hline 
		$norm(A)$ & $cond(A)$ & $\gamma$ & problem & $\tau$& num zeros\\
		\hline\multirow{4}{*}{5781.5}&\multirow{4}{*}{-}&\multirow{4}{*}{0}&{\tt myrands1}&100&1001\\
		& & &{\tt myrands2}&1000&1011\\
		& & &{\tt myrands3}&10000&1133\\
		& & &{\tt myrands4}&100000&1850\\
		\hline \multirow{4}{*}{5781.5}&\multirow{4}{*}{5.7815e+06}&\multirow{4}{*}{0.001}&{\tt myrandi1}&0.1&583\\
		& & &{\tt myrandi2}&100&1011\\
		& & &{\tt myrandi3}&10000&1133\\
		& & &{\tt myrandi4}&100000&1850\\
		\hline \multirow{4}{*}{5782.5}&\multirow{4}{*}{5782.5}&\multirow{4}{*}{1}&{\tt myrandm1}&0.1&4\\
		& & &{\tt myrandm2}&100&620\\
		& & &{\tt myrandm3}&10000&1130\\
		& & &{\tt myrandm4}&100000&1850\\
		\hline 
	\end{tabular}}
	 \label{table:myrand} 
\end{table}
\begin{table}
	\caption{{\tt spectra }$n=402$} 
	{\begin{tabular}{|c|c|c|c|c|c|}
		 \hline 
		 $norm(A)$ & $cond(A)$ & $\gamma$ & problem & $\tau$& num zeros\\
		\hline\multirow{4}{*}{2.056413e+03}&\multirow{4}{*}{-}&\multirow{4}{*}{0}&{\tt spectras1}&1.0e-06&322\\
		& & &{\tt spectras2}&1.0e-04&348\\
		& & &{\tt spectras3}&1.0e-03&372\\
		& & &{\tt spectras4}&1.0e-02&389\\
		\hline \multirow{4}{*}{2.056414e+03}&\multirow{4}{*}{2.056414e+06}&\multirow{4}{*}{1.0e-03}&{\tt spectrai1}&3.0e-05&2\\
		& & &{\tt spectrai2}&1.0e-03&91\\
		& & &{\tt spectrai3}&1.0e-02&313\\
		& & &{\tt spectrai4}&5.0e-01&398\\
		\hline \multirow{4}{*}{2.057413e+03}&\multirow{4}{*}{2.057413e+03}&\multirow{4}{*}{1}&{\tt spectram1}&1.0e-03&1\\
		& & &{\tt spectram2}&2.0e-01&109\\
		& & &{\tt spectram3}&1&332\\
		& & &{\tt spectram4}&30&388\\
		\hline 
	\end{tabular}}
	\label{table:spectra} 
\end{table}
\begin{table}
	 \caption{{\tt sigrec }$n=4096$}
	{\begin{tabular}{|c|c|c|c|c|c|}
		 \hline $norm(A)$ & $cond(A)$ & $\gamma$ & problem & $\tau$& num zeros\\
		\hline\multirow{4}{*}{1.119904e+00}&\multirow{4}{*}{-}&\multirow{4}{*}{0}&{\tt sigrecs1}&5.0e-05&3549\\
		& & &{\tt sigrecs2}&2.0e-04&3816\\
		& & &{\tt sigrecs3}&5.0e-03&3860\\
		& & &{\tt sigrecs4}&1.0e-01&3973\\
		\hline \multirow{4}{*}{1.119905e+00}&\multirow{4}{*}{1.119905e+06}&\multirow{4}{*}{1.0e-06}&{\tt sigreci1}&5.0e-08&828\\
		& & &{\tt sigreci2}&5.0e-05&3535\\
		& & &{\tt sigreci3}&2.0e-04&3813\\
		& & &{\tt sigreci4}&1.0e-01&3973\\
		\hline \multirow{4}{*}{1.120904e+00}&\multirow{4}{*}{1.120904e+03}&\multirow{4}{*}{1.0e-03}&{\tt sigrecm1}&4.5e-07&16\\
		& & &{\tt sigrecm2}&1.0e-04&1519\\
		& & &{\tt sigrecm3}&2.0e-03&3310\\
		& & &{\tt sigrecm4}&1.0e-01&3973\\
		\hline 
	\end{tabular}}
	 \label{table:sigrec} 
\end{table}
\begin{table}
	 \caption{{\tt proxnewt} $n=5000$} 
	{\begin{tabular}{|c|c|c|c|c|c|}
		 \hline $norm(A)$ & $cond(A)$ & $\gamma$ & problem & $\tau$& num zeros\\
		\hline\multirow{4}{*}{1.103666e+02}&\multirow{4}{*}{-}&\multirow{4}{*}{0}&{\tt proxnewts1}&6.7e-06&1893\\
		& & &{\tt proxnewts2}&6.7e-05&3192\\
		& & &{\tt proxnewts3}&6.7e-04&4365\\
		& & &{\tt proxnewts4}&6.7e-03&4960\\
		\hline \multirow{4}{*}{1.103667e+02}&\multirow{4}{*}{1.103667e+06}&\multirow{4}{*}{1.0e-04}&{\tt proxnewti1}&6.7e-06&1395\\
		& & &{\tt proxnewti2}&6.7e-05&3060\\
		& & &{\tt proxnewti3}&6.7e-04&4344\\
		& & &{\tt proxnewti4}&6.7e-03&4959\\
		\hline \multirow{4}{*}{1.103771e+02}&\multirow{4}{*}{1.051211e+04}&\multirow{4}{*}{1.0e-02}&{\tt proxnewtm1}&6.7e-06&193\\
		& & &{\tt proxnewtm2}&6.7e-05&1283\\
		& & &{\tt proxnewtm3}&6.7e-04&3602\\
		& & &{\tt proxnewtm4}&6.7e-03&4926\\
		\hline 
	\end{tabular}}
	\label{table:proxnewt} 
\end{table}

\section{Effect of overestimating $\|A\|$ in the Gradient Balance Condition} \label{appendix:alphahat}
In our experiments, we set $\alpha = 1/L$ in iiCG, in the gradient balance condition computation. Often $L$ is not known and is hard to compute (for medium and large-scale problems computing $L$ may take longer than running the algorithm itself). Figure~\ref{fig:alphahat} shows that while $\alpha = \frac{1}{L}$ is a good choice, iiCG-2 is fairly insensitive to the choice of $\alpha$, particularly if the value $1/L$ is overestimated.

\begin{figure} \scriptsize \centering \includegraphics[scale=0.4]{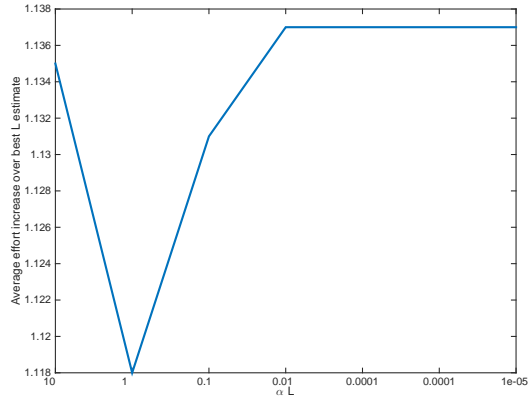} \caption{Average increase in matrix-vector products relative to the optimal choice for $\alpha$ (obtained by experimentation), for various choices of $\alpha$. The results are compiled from all 48 test problems, and the runs were stopped when  {\tt tol}=$10^{-4}$  .} \label{fig:alphahat} 
\end{figure}

\end{document}